\newcommand{\R}{\mathbb{R}}
\newcommand{\C}{\mathbb{C}}
\newcommand{\N}{\mathbb{N}}
\newcommand{\gs}{\sigma}
\newcommand{\gO}{\Omega}
\newcommand{\cA}{\mathcal{A}}
\newcommand{\cD}{\mathcal{D}} 
\newcommand{\cF}{\mathcal{F}}
\newcommand{\cS}{\mathcal{S}}
\newcommand{\Ddet}{D_{\Delta t, h}^{\operatorname{det}}}
\newcommand{\Dstoch}[1]{D_{\Delta t, h}^{\operatorname{stoch},#1}}
\newcommand{\Dpart}[2]{D_{\Delta t, h}^{\operatorname{#1},#2}}
\newcommand{\kdelta}[2]{\delta_{#1, #2}}
\DeclareMathOperator{\E}{\mathbb{E}} 
\DeclareMathOperator{\trace}{Tr}
\newcommand{\dd}{\mathrm{d}}
\newcommand{\inpro}[3]{ \left\langle #1 , #2 \right\rangle_{#3} }
\newcommand{\norm}[2]{\| #1 \|_{#2}}
\newcommand{\bignorm}[2]{\left\| #1 \right\|_{#2}}
\newcommand{\KL}{Karhunen--Lo\`eve } 
\newcommand{\levy}{L\'evy }
\newtheorem{lemma}{Lemma}[section]
\newtheorem{proposition}[lemma]{Proposition}
\newtheorem{theorem}[lemma]{Theorem}
\newtheorem{corollary}[lemma]{Corollary}
\theoremstyle{remark}
\newtheorem{remark}[lemma]{Remark}
\theoremstyle{definition}
\newtheorem{definition}[lemma]{Definition}
\newtheorem{assumption}[lemma]{Assumption}
\newtheorem{example}[lemma]{Example}
\begin{document}
\title[Mean-square stability analysis of approximations of SDEs in infinite dimensions]{
Mean-square stability analysis of approximations of stochastic differential equations in infinite dimensions
}

\author[A.~Lang]{Annika Lang} \address[Annika Lang]{\newline Department of Mathematical Sciences
\newline Chalmers University of Technology \& University of Gothenburg
\newline S--412 96 G\"oteborg, Sweden.} \email[]{annika.lang@chalmers.se}

\author[A.~Petersson]{Andreas Petersson} \address[Andreas Petersson]{\newline Department of Mathematical Sciences
\newline Chalmers University of Technology \& University of Gothenburg
\newline S--412 96 G\"oteborg, Sweden.} 
\email[]{andreas.petersson@chalmers.se}

\author[A.~Thalhammer]{Andreas Thalhammer} \address[Andreas Thalhammer]{\newline Doktoratskolleg Computational Mathematics \& Institute for Stochastics
\newline Johannes Kepler University Linz
\newline A--4040 Linz, Austria.} 
\email[]{andreas.thalhammer@jku.at}

\thanks{
Acknowledgement. The authors wish to express many thanks to Evelyn Buckwar and Stig Larsson for their support and for fruitful discussions and to two anonymous referees who helped to improve the results and the presentation.
The work was supported in part by the Swedish Research Council under Reg.~No.~621-2014-3995, the Knut and Alice Wallenberg foundation, and the Austrian Science Fund (FWF) under W1214-N15, project DK14. }

\subjclass{60H15, 65M12, 60H35, 65C30, 65M60}
\keywords{Asymptotic mean-square stability, numerical approximations of stochastic differential equations, linear stochastic partial differential equations, \levy processes, rational approximations, Galerkin methods, spectral methods, finite element methods, Euler--Maruyama scheme, Milstein scheme.}

\begin{abstract}
The (asymptotic) behaviour of the second moment of solutions to stochastic differential equations is treated in mean-square stability analysis. This property is discussed for approximations of infinite-dimensional stochastic differential equations and necessary and sufficient conditions ensuring mean-square stability are given. They are applied to typical discretization schemes such as combinations of spectral Galerkin, finite element, Euler--Maruyama, Milstein, Crank--Nicolson, and forward and backward Euler methods. Furthermore, results on the relation to stability properties of corresponding analytical solutions are provided. Simulations of the stochastic heat equation illustrate the theory.
\end{abstract}

\maketitle

%

\section{Introduction}

An interesting quantity of a stochastic differential equation (SDE) or a stochastic partial differential equation (SPDE) is the qualitative behaviour of its second moment for large times. Both types of equations can be interpreted as SDEs on a (here separable) Hilbert space $(H,\inpro{\cdot}{\cdot}{H})$. More specifically, let us consider a complete filtered probability space $(\gO, \cA, (\cF_t,{t\geq 0}), P)$ satisfying the ``usual conditions'' and the model problem
\begin{equation}\label{eq:SPDE}
 \dd X(t) = (AX(t) + FX(t)) \, \dd t + G(X(t)) \, \dd L(t)
\end{equation}
with $\cF_0$-measurable, square-integrable initial condition $X(0) = X_0$.
Here, $A: \cD(A) \to H$ is the generator of a $C_0$-semigroup~$S=(S(t), t \ge 0)$ on~$H$ and $F$ is a linear and bounded operator on~$H$, i.e., $F \in L(H)$. Furthermore, $L$ denotes a $U$-valued $Q$-L\'evy process that is assumed to be a square-integrable martingale as considered in~\cite{PZ07} on the real separable Hilbert space $(U,\inpro{\cdot}{\cdot}{U})$ with covariance $Q \in L(U)$ of trace class and let $G \in L(H;L(U;H))$.

We recall from~\cite{L06} that an equilibrium (solution) of~\eqref{eq:SPDE} is the zero solution $(X_\mathrm{e}(t) = 0, t\geq 0)$.
It is called \emph{mean-square stable} if, for every $\varepsilon > 0$, there exists $\delta > 0$ such that $\E[\| X(t) \|_H^2] < \varepsilon$ for all $t\geq 0$ whenever $\E[\|X_0\|_H^2] < \delta$.
It is further \emph{asymptotically mean-square stable} if it is mean-square stable and there exists $\delta > 0$ such that $\mathbb{E}[\|X_0\|_H^2] < \delta$ implies $\lim_{t \to \infty} \E [ \| X(t) \|_H^2] = 0$.
A lot of effort has been dedicated to the asymptotic mean-square stability analysis in finite and infinite dimensions, see e.g.,~\cite{K12,M07,A74,L06}.

Since analytical solutions to SDEs are rarely available, approximations in time and possibly in space by numerical methods have to be considered. The main focus of research in recent years has been on strong and weak convergence when the discretization parameters $\Delta t$ in time and $h$ in space tend to zero. However, this property does not guarantee that the approximation shares the same (asymptotic) mean-square stability properties as the analytical solution. For finite-dimensional SDEs it is known that the specific choice of~$\Delta t$ is essential. The goal of this manuscript is to generalize the theory of asymptotic mean-square stability analysis to a Hilbert space setting. We develop a theory for approximation schemes that has apriori no relation to the original equation~\eqref{eq:SPDE} and its properties. Later on, we will discuss which conditions on~\eqref{eq:SPDE} and its approximation lead to similar behaviour.
An important application of mean-square stability is in multilevel Monte Carlo methods, where combinations of approximations on different space and time grids are computed. If the solution is mean-square unstable on any of the included levels, this is enough for the estimator to not behave as it should, see, e.g., \cite{A13}.

The mean-square stability analysis of numerical approximations of SDEs started by considering the approximations of the one-dimensional geometric Brownian motion, see e.g.,~\cite{SM96,H00a,H00}. As it has been pointed out in~\cite{BK10,BS12}, the analysis of higher-dimensional systems and their approximations is also necessary, since the asymptotic behaviour of the corresponding mean-square processes of systems with commuting and non-commuting matrices often differs.
The tools to perform mean-square stability analysis of SDE approximations presented in~\cite{BS12} could in principle be used for approximations of infinite-dimensional SDEs by a method of lines approach: After projection on an $N_h$-dimensional space the mean-square stability properties of the resulting finite-dimensional SDEs and their approximations can be determined by considering the eigenvalues of $N_h^2 \times N_h^2$-dimensional matrices. However, due to the computational complexity as $N_h \rightarrow \infty$, neither the symbolic nor the numerical computation of these eigenvalues can be done for arbitrarily large systems.
For this reason, we use an
approach based on tensor-product-space-valued processes and properties of tensorized linear operators.

The outline of this article is as follows:
Section~\ref{sec:theory} sets up a theory of mean-square stability analysis for discrete stochastic processes derived from recursions as they appear in approximations of infinite-dimensional SDEs. In the main result, necessary and sufficient conditions for asymptotic mean-square stability are shown. These results are then applied in Section~\ref{sec:applications-galerkin} to numerical approximations of~\eqref{eq:SPDE} based on spatial Galerkin discretization schemes and time discretizations with Euler--Maruyama and Milstein methods using backward/forward Euler and Crank--Nicolson as rational semigroup approximations. We conclude this work presenting simulations of stochastic heat equations with spectral Galerkin and finite element methods in Section~\ref{sec:numerics} that illustrate the theory.

\section{Asymptotic mean-square stability analysis}\label{sec:theory}


This section is devoted to the setup of asymptotic mean-square stability for families of stochastic processes in discrete time given by recursion schemes as they typically show up in approximations of~\eqref{eq:SPDE}. We derive necessary and sufficient conditions ensuring asymptotic mean-square stability that can be checked in practice as it is shown later in Section~\ref{sec:applications-galerkin}.

Let
$(V_h, h \in (0,1])$ be a family of finite-dimensional subspaces $V_h \subset H$ with $\operatorname{dim}(V_h) = N_h\in\mathbb{N}$ indexed by a refinement parameter~$h$.
With an inner product induced by $\inpro{\cdot}{\cdot}{H}$, $V_h$ becomes a Hilbert space with norm $\norm{\cdot}{H}$. For a linear operator $D: V_h \to V_h$, the operator norm $\norm{D}{L(V_h)}$ is therefore given by 
	$\sup_{v \in V_h} \norm{D v}{H}/\norm{v}{H}$
and can be seen to coincide with $\norm{D P_h}{L(H)}$, where $P_h$ denotes the orthogonal projection onto~$V_h$.

Let us further consider the time interval $[0,\infty)$
and for convenience equidistant time steps $t_j = j \Delta t$, $j \in \N_0$, with fixed step size $\Delta t > 0$. Hence, $t \to \infty$ is equivalent to $j\to \infty$. 
Assume that we are given a sequence of $V_h$-valued random variables $(X_h^j, j \in \N_0)$ determined by the linear recursion scheme
\begin{align}\label{FullDiscretScheme}
    X_h^{j+1} = \Ddet X_h^j + \Dstoch{j} X_h^j
\end{align}
 with $\cF_0$-measurable initial condition~$X_h^0 \in L^2(\gO;V_h)$, i.e., $\E [\| X_h^0 \|_{V_h}^2]< \infty$. Here $\Ddet \in L(V_h)$ and $\Dstoch{j}$ is an $L(V_h)$-valued random variable for all~$j$.

In terms of SDE~\eqref{eq:SPDE}, one can think of $\Ddet$ as the approximation of the solution operator of the deterministic part
\begin{align*}
  \dd X(t) = (AX(t) + F X(t)) \, \dd t, \quad  t \in [t_j,t_{j+1})
\end{align*}
and $\Dstoch{j}$ approximates the stochastic part 
\begin{align*}
\dd X(t) = G(X(t)) \, \dd L(t), \quad  t \in [t_j,t_{j+1}).
\end{align*}
Although, in general, any not necessarily equidistant time discretization $(t_j, j \in \N_0)$ that satisfies $t_j \rightarrow \infty$ if $j \rightarrow \infty$ would be sufficient for the following theory, we see in the given SDE example that $\Ddet$ would be $j$-dependent in this case, which we want to omit for the sake of readability.

Inspired by properties of standard approximation schemes for~\eqref{eq:SPDE}, we put the following assumptions on the family $(\Dstoch{j}, j \in \N_0)$.

\begin{assumption}\label{ass:Dstoch}
 Let $h, \Delta t > 0$ be fixed.
 The family $(\Dstoch{j},j\in\N_0)$ is $\cF$-compatible in the sense of~\cite{BW06,L10}, i.e., $\Dstoch{j}$ is $\cF_{t_{j+1}}$-measurable and $\E[ \Dstoch{j} | \cF_{t_j} ] = 0$ for all $j \in \N_0$.
 Furthermore, for all $j \in \N_0$, let $\norm{\Dstoch{j}}{L^2(\Omega;L(V_h))} = \E[ \norm{\Dstoch{j}}{L(V_h)}^2 ]^{1/2} < \infty$ and
 \begin{equation*}
  \E\left[  \Dstoch{j} \otimes \Dstoch{j}\right| \cF_{t_j}]
    = \E\left[  \Dstoch{j} \otimes \Dstoch{j}\right],
 \end{equation*}
 where $\otimes$ denotes the tensor product.
\end{assumption}

For the recursion scheme~\eqref{FullDiscretScheme} an \emph{equilibrium} (solution) is given by the zero solution, which is defined as $X_{h,\mathrm{e}}^j = 0$ for all $j\in\N_0$. 
We define mean-square stability of the zero solution of~\eqref{FullDiscretScheme} in what follows. 
\begin{definition}
Let $X_h= (X_h^j,j \in \N_0)$ be given by~\eqref{FullDiscretScheme} for fixed $h$ and~$\Delta t$. 
The zero solution $(X_{h,\mathrm{e}}^j = 0,j\in\mathbb{N}_0)$ of~\eqref{FullDiscretScheme} is called \emph{mean-square stable} if, for every $\varepsilon > 0$, there exists $\delta > 0$ such that
$\E[\| X_h^j \|_H^2] < \varepsilon$  for all $j\in\N_0$
whenever $\E[\| X_h^0\|_H^2] < \delta$.

It is called \emph{asymptotically mean-square stable} if it is mean-square stable and there exists $\delta> 0$ such that $\mathbb{E}[\|X_h^0\|_H^2] < \delta$ implies
 $\lim_{j \rightarrow \infty} \E [ \| X_h^j \|_H^2]=0$.
%
Furthermore, it is called \emph{asymptotically mean-square unstable} if it is not asymptotically mean-square stable.
\end{definition}

For convenience, the abbreviation \emph{(asymptotic) mean-square stability} for the (asymptotic) mean-square stability of the zero solution of~\eqref{FullDiscretScheme} or~\eqref{eq:SPDE} is used if it is clear from the context.

When applied to $Y_j = X^j_h$, the following lemma provides an equivalent condition for mean-square stability 
in terms of the tensor-product-space-valued process $X_h^j \otimes X_h^j \in V_h^{(2)}$. Here, for a general Hilbert space $H$, the abbreviation ${H^{(2)} = H \otimes H}$ is used and $H^{(2)}$ is defined as the completion of the algebraic tensor product with respect to the norm induced by
\begin{align*}
	\inpro{v}{w}{H \otimes H} = \sum^N_{i = 1} \sum^M_{j = 1} \inpro{v_{1,i}}{w_{1,j}}{H} \inpro{v_{2,i}}{w_{2,j}}{H},
\end{align*}
where $v = \sum^N_{i=1} v_{1,i} \otimes v_{2,i}$ and $w = \sum^M_{j=1} w_{1,j} \otimes w_{2,j}$ are representations of elements $v$ and $w$ in the algebraic tensor product.

\begin{lemma}
\label{lem:equivalenceConvergence}
 Let $V_h$ be a finite-dimensional subspace of~$H$. Then, for any sequence $(Y_j, j \in \N_0)$ of $V_h$-valued, square-integrable random variables,
$\lim_{j \to \infty} \E [ Y_j \otimes Y_j ] = 0$
if and only if
  $\lim_{j\to \infty} \E [\| Y_j \|_{H}^2] = 0$.
\end{lemma}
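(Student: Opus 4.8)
The plan is to exploit the fact that $V_h$ is finite-dimensional, so that the norm $\norm{\cdot}{H}$ restricted to $V_h$ is equivalent to the norm coming from any fixed orthonormal basis, and that the trace functional on $V_h^{(2)}$ is continuous. Concretely, fix an orthonormal basis $(e_1, \dots, e_{N_h})$ of $V_h$ (orthonormal with respect to $\inpro{\cdot}{\cdot}{H}$). Then for any $v \in V_h$ we have the identity $\norm{v}{H}^2 = \sum_{i=1}^{N_h} \inpro{v}{e_i}{H}^2 = \inpro{v \otimes v}{\sum_{i=1}^{N_h} e_i \otimes e_i}{V_h^{(2)}}$, where the second equality uses the definition of the inner product on $H^{(2)}$ recalled just before the lemma. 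Taking expectations and using that the pairing with the fixed element $\Theta := \sum_{i=1}^{N_h} e_i \otimes e_i \in V_h^{(2)}$ is linear and continuous, we obtain
\begin{equation*}
\E[\norm{Y_j}{H}^2] = \inpro{\E[Y_j \otimes Y_j]}{\Theta}{V_h^{(2)}}.
\end{equation*}
This single identity is the crux of the argument: it expresses the scalar quantity $\E[\norm{Y_j}{H}^2]$ as a fixed bounded linear functional evaluated at $\E[Y_j \otimes Y_j]$.

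From here both implications are immediate. If $\E[Y_j \otimes Y_j] \to 0$ in $V_h^{(2)}$, then by Cauchy--Schwarz $|\E[\norm{Y_j}{H}^2]| \le \norm{\E[Y_j \otimes Y_j]}{V_h^{(2)}} \norm{\Theta}{V_h^{(2)}} \to 0$, which gives the forward direction. For the converse, I would argue that the positive semidefiniteness of the operators $\E[Y_j \otimes Y_j]$ (viewed as trace-class, in fact finite-rank, nonnegative self-adjoint operators on $V_h$) forces convergence of the operators from convergence of their traces: indeed $\inpro{\E[Y_j \otimes Y_j]}{\Theta}{V_h^{(2)}}$ is precisely $\trace(\E[Y_j \otimes Y_j])$ under the canonical identification of $V_h^{(2)}$ with operators on $V_h$, and for a nonnegative self-adjoint operator $B$ on a finite-dimensional space one has $\norm{B}{L(V_h)} \le \trace(B)$, while the Hilbert--Schmidt norm (which is the $V_h^{(2)}$-norm after identification) satisfies $\norm{B}{\hsspacetwo{V_h}{V_h}} \le \trace(B)$ as well since all eigenvalues are nonnegative and bounded by the trace. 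Hence $\E[\norm{Y_j}{H}^2] \to 0$ implies $\trace(\E[Y_j\otimes Y_j]) \to 0$, which implies $\E[Y_j \otimes Y_j] \to 0$ in $V_h^{(2)}$.

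The main technical point to get right — and the only place any care is needed — is the identification of the element $\E[Y_j \otimes Y_j] \in V_h^{(2)}$ with a nonnegative self-adjoint (finite-rank) operator on $V_h$, together with the fact that under this identification the $H^{(2)}$-inner product with $\Theta = \sum_i e_i \otimes e_i$ coincides with the trace. This uses that $V_h^{(2)}$ is finite-dimensional (so there are no convergence or domain subtleties), that $Y_j \otimes Y_j$ corresponds to the rank-one operator $w \mapsto \inpro{w}{Y_j}{H} Y_j$, which is nonnegative and self-adjoint, and that this property is preserved under taking expectations of the $V_h^{(2)}$-valued Bochner-integrable random variable $Y_j \otimes Y_j$ (integrability follows from square-integrability of $Y_j$, since $\norm{Y_j \otimes Y_j}{V_h^{(2)}} = \norm{Y_j}{H}^2$). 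Everything else is a two-line estimate. I would therefore structure the proof as: (i) fix the basis and establish the displayed identity; (ii) deduce the forward implication by Cauchy--Schwarz; (iii) record that each $\E[Y_j \otimes Y_j]$ is nonnegative self-adjoint as an operator on $V_h$ and that on a finite-dimensional space the operator/Hilbert--Schmidt norm of such an operator is dominated by its trace; (iv) deduce the converse.
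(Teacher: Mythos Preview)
Your proof is correct. Both directions go through as you describe: the identity $\E[\norm{Y_j}{H}^2] = \inpro{\E[Y_j\otimes Y_j]}{\Theta}{V_h^{(2)}}$ with $\Theta=\sum_i e_i\otimes e_i$ gives the forward implication by Cauchy--Schwarz, and the converse follows from the trace bound $\norm{B}{\hsspacetwo{V_h}{V_h}}\le\trace(B)$ for nonnegative self-adjoint $B$ on a finite-dimensional space.

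The paper argues a bit differently in each direction. For the forward implication it expands $\norm{\E[Y_j\otimes Y_j]}{H^{(2)}}^2$ by Parseval and observes that the diagonal terms $k=\ell$ already dominate each summand of $\E[\norm{Y_j}{H}^2]=\sum_k\E[\inpro{Y_j}{\psi_k}{H}^2]$; no $\Theta$ or trace language enters. For the converse it simply invokes the triangle inequality for the Bochner integral, $\norm{\E[Y_j\otimes Y_j]}{H^{(2)}}\le\E[\norm{Y_j\otimes Y_j}{H^{(2)}}]=\E[\norm{Y_j}{H}^2]$, which avoids the operator identification and the positive-semidefiniteness discussion altogether. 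Your route is more operator-theoretic and makes the role of the trace explicit, which is conceptually nice; the paper's converse is shorter and uses only a generic Banach-space fact. Either way the argument is two lines once the basis is fixed.
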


\begin{proof}
%
By Parseval's identity, for an orthonormal basis $(\psi_1,\dots, \psi_{N_h})$ of $V_h$, we have 
\begin{align*}
\bignorm{\E \big[ Y_j \otimes Y_j \big]}{H^{(2)}}^2  &= \sum^{N_h}_{k, \ell = 1 }  \left| \E \left[ \inpro{Y_j \otimes Y_j}{ \psi_k \otimes \psi_\ell}{H^{(2)}}\right]\right|^2 = \sum^{N_h}_{k, \ell = 1 } \left|  \E \big[ \langle Y_j, \psi_k \rangle_{H} \langle Y_j, \psi_\ell \rangle_{H}\big]  \right|^2
\end{align*}
and similarly 
\begin{align*}
\E \left[ \| Y_j \|_H^2 \right] = \sum_{k=1}^{N_h} \E \left[\inpro{Y_j}{\psi_k}{H}^2\right].
\end{align*}
Therefore, one implication is immediately obtained, while the other follows from the fact that 
\begin{equation*}
\bignorm{\E \left[ Y_j \otimes Y_j \right]}{H^{(2)}} \le \E \left[\bignorm{ Y_j \otimes Y_j }{H^{(2)}}\right] = \E \left[ \| Y_j \|_H^2 \right]. \qedhere
\end{equation*}
\end{proof}

This lemma enables us to show the following sufficient condition for asymptotic mean-square stability.

\begin{theorem}\label{thm:generalApproxMSstab}
Let $X_h = (X_h^j,j\in \N_0)$ given by~\eqref{FullDiscretScheme} satisfy Assumption~\ref{ass:Dstoch} 
and set
\begin{align*}
\cS_j = \Ddet \otimes \Ddet + \E[ \Dstoch{j} \otimes \Dstoch{j}].
\end{align*}
Then the zero solution of~\eqref{FullDiscretScheme} is asymptotically mean-square stable, if
\begin{align*}
\lim_{j\to \infty} \| \cS_j \cdots \cS_0 \|_{L(V^{(2)}_h)} = 0.
\end{align*}
\end{theorem}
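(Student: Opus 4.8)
The plan is to reduce the statement, via Lemma~\ref{lem:equivalenceConvergence}, to the behaviour of the deterministic sequence $Z_j := \E[X_h^j \otimes X_h^j] \in V^{(2)}_h$, which I claim satisfies the linear recursion $Z_{j+1} = \cS_j Z_j$; iterating gives $Z_j = \cS_{j-1}\cdots\cS_0 Z_0$, so the hypothesis forces $Z_j \to 0$ in $V^{(2)}_h$, and Lemma~\ref{lem:equivalenceConvergence} (applied to $Y_j = X_h^j$) then yields $\E[\|X_h^j\|_H^2] \to 0$.

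First I would settle well-definedness. A straightforward induction shows that $X_h^j$ is $\cF_{t_j}$-measurable (using $t_0 = 0$, that $\Ddet$ is deterministic, and that $\Dstoch{j-1}$ is $\cF_{t_j}$-measurable) and that $\E[\|X_h^j\|_H^2] < \infty$. For the latter the only delicate term is $\E[\|\Dstoch{j} X_h^j\|_H^2]$: bounding $\|\Dstoch{j} X_h^j\|_H \le \|\Dstoch{j}\|_{L(V_h)}\|X_h^j\|_H$ and conditioning on $\cF_{t_j}$ reduces matters to controlling $\E[\|\Dstoch{j}\|^2_{L(V_h)} \,|\, \cF_{t_j}]$. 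Since $V_h$ is finite-dimensional, the operator norm is dominated by the Hilbert--Schmidt norm, and $\E[\|\Dstoch{j}\|^2_{\mathrm{HS}} \,|\, \cF_{t_j}]$ is a fixed bounded linear functional of $\E[\Dstoch{j} \otimes \Dstoch{j} \,|\, \cF_{t_j}]$, which equals $\E[\Dstoch{j} \otimes \Dstoch{j}]$ by Assumption~\ref{ass:Dstoch}; hence it equals the finite deterministic constant $\E[\|\Dstoch{j}\|^2_{\mathrm{HS}}]$, and the induction closes. The same Cauchy--Schwarz estimate shows that $\E[\Dstoch{j} \otimes \Dstoch{j}]$, and therefore $\cS_j$, is a well-defined element of $L(V^{(2)}_h)$.

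The core computation is the one-step identity $\E[X_h^{j+1} \otimes X_h^{j+1} \,|\, \cF_{t_j}] = \cS_j (X_h^j \otimes X_h^j)$. Using bilinearity of $\otimes$ and $(C \otimes D)(v \otimes w) = Cv \otimes Dw$,
\begin{align*}
X_h^{j+1} \otimes X_h^{j+1} &= (\Ddet \otimes \Ddet)(X_h^j \otimes X_h^j) + (\Ddet \otimes \Dstoch{j})(X_h^j \otimes X_h^j) \\
&\quad + (\Dstoch{j} \otimes \Ddet)(X_h^j \otimes X_h^j) + (\Dstoch{j} \otimes \Dstoch{j})(X_h^j \otimes X_h^j).
\end{align*}
Now apply $\E[\,\cdot\,|\,\cF_{t_j}]$. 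Expanding $X_h^j$ in an orthonormal basis of $V_h$ turns each term into a finite sum of $\cF_{t_j}$-measurable scalars times a fixed operator applied to a basis tensor, so the $\cF_{t_j}$-measurable coefficients pull out: the first term is unchanged (as $\Ddet$ is deterministic), the two mixed terms vanish since $\E[\Dstoch{j} \,|\, \cF_{t_j}] = 0$, and the last one becomes $\E[\Dstoch{j} \otimes \Dstoch{j} \,|\, \cF_{t_j}](X_h^j \otimes X_h^j) = \E[\Dstoch{j} \otimes \Dstoch{j}](X_h^j \otimes X_h^j)$ by Assumption~\ref{ass:Dstoch}. Taking expectations and using the tower property gives $Z_{j+1} = \cS_j Z_j$, hence $Z_j = \cS_{j-1}\cdots\cS_0 Z_0$ for $j \ge 1$.

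To conclude I would note that $\|Z_0\|_{V^{(2)}_h} \le \E[\|X_h^0\|_H^2]$ (as in the proof of Lemma~\ref{lem:equivalenceConvergence}) and that, for an orthonormal basis $(\psi_1,\dots,\psi_{N_h})$ of $V_h$, $\E[\|X_h^j\|_H^2] = \inpro{Z_j}{\sum_{k=1}^{N_h} \psi_k \otimes \psi_k}{V^{(2)}_h} \le \sqrt{N_h}\, \|Z_j\|_{V^{(2)}_h}$. Combining these gives $\E[\|X_h^j\|_H^2] \le \sqrt{N_h}\, \|\cS_{j-1} \cdots \cS_0\|_{L(V^{(2)}_h)}\, \E[\|X_h^0\|_H^2]$; since the scalar sequence $(\|\cS_{j-1}\cdots\cS_0\|_{L(V^{(2)}_h)})_{j}$ tends to $0$ it is bounded by some $C < \infty$, so choosing $\delta = \varepsilon/(\sqrt{N_h}\,C)$ yields mean-square stability, while letting $j \to \infty$ yields $\lim_{j\to\infty}\E[\|X_h^j\|_H^2] = 0$ for every square-integrable initial condition, i.e.\ asymptotic mean-square stability. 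I expect the main obstacle to be making the conditional-expectation manipulations of the tensor- and operator-valued random variables in the third paragraph fully rigorous, in particular the identity $\E[(\Dstoch{j} \otimes \Dstoch{j})(X_h^j \otimes X_h^j) \,|\, \cF_{t_j}] = \E[\Dstoch{j} \otimes \Dstoch{j} \,|\, \cF_{t_j}](X_h^j \otimes X_h^j)$, together with the integrability bookkeeping that licenses pulling the $\cF_{t_j}$-measurable factors out; everything else is a short computation.
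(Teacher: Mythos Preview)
Your proposal is correct and follows essentially the same route as the paper: expand $X_h^{j+1}\otimes X_h^{j+1}$ bilinearly, use Assumption~\ref{ass:Dstoch} to kill the mixed terms and to replace the conditional covariance by its deterministic value, iterate the resulting recursion $Z_{j+1}=\cS_j Z_j$, and finish via the $\sqrt{N_h}$-estimate relating $\E[\|X_h^j\|_H^2]$ to $\|Z_j\|_{V_h^{(2)}}$ together with Lemma~\ref{lem:equivalenceConvergence}. Your extra care with the square-integrability induction and with pulling $\cF_{t_j}$-measurable factors out of the conditional expectation is welcome but does not change the argument.
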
 

\begin{proof}
Let us first remark that $\cS_j \in L(V^{(2)}_h)$ for all $j \in \N_0$ by the properties of $\Ddet$ and $\Dstoch{j}$ and of the Hilbert tensor product.
In order to show asymptotic mean-square stability, it suffices to show $\E[X_h^j \otimes X_h^j]\to 0$ as $j \to \infty$ by Lemma~\ref{lem:equivalenceConvergence}. For this, consider
\begin{align*}
\E[X_h^{j+1} \otimes X_h^{j+1}]
&= \E\left[ (\Ddet + \Dstoch{j})X_h^j \otimes  (\Ddet + \Dstoch{j})X_h^j  \right] \\
&= \E\left[(\Ddet \otimes \Ddet) (X_h^j\otimes X_h^j)\right] + \E\left[(\Dstoch{j} \otimes \Dstoch{j})(X_h^j\otimes X_h^j)\right] \\
&\hspace{0.5cm} + \mathbb{E}\left[(\Ddet \otimes \Dstoch{j})(X_h^j \otimes X_h^j)\right] + \mathbb{E}\left[(\Dstoch{j} \otimes \Ddet)(X_h^j \otimes X_h^j)\right].
\end{align*}
The mixed terms vanish by the observation that

\begin{align*}
 \mathbb{E}\left[(\Ddet \otimes \Dstoch{j})(X_h^j\otimes X_h^j)\right] 
    & = \mathbb{E}\left[(\Ddet \otimes \E[\Dstoch{j}|\cF_{t_j}])(X_h^j\otimes X_h^j)\right]
    = 0
\end{align*}
since $X_h^j$ and $\Ddet$ are $\cF_{t_j}$-measurable and $\E[\Dstoch{j}|\cF_{t_j}] = 0$ by Assumption~\ref{ass:Dstoch}.

Applying Assumption~\ref{ass:Dstoch} once more, we therefore conclude
\begin{align*}
 \E[X_h^{j+1} \otimes X_h^{j+1}] 
    & = \E\left[\Bigl(\Ddet \otimes \Ddet + \Dstoch{j} \otimes \Dstoch{j} \Bigr) (X_h^j \otimes X_h^j)\right]\\
    & = \E\left[\Bigl(\Ddet \otimes \Ddet + \E[\Dstoch{j} \otimes \Dstoch{j}|\cF_{t_j}] \Bigr) (X_h^j \otimes X_h^j)\right]\\
    & = \Bigl(\Ddet \otimes \Ddet + \E[\Dstoch{j} \otimes \Dstoch{j}] \Bigr)\E\left[X_h^j \otimes X_h^j)\right].
\end{align*}
and obtain
\begin{align*}
\E[X_h^{j+1} \otimes X_h^{j+1}] 
  = \cS_j \E[X_h^j \otimes X_h^j] 
  = (\cS_j \cdots \cS_0) \E[X_h^0 \otimes X_h^0].
\end{align*}
Since $\lim_{j\to \infty }\| \cS_j \cdots \cS_0\|_{L(V_h^{(2)})}=0$, mean-square stability is shown with the computation
\begin{align*}
\mathbb{E}[\| X_h^{j+1} \|_H^2]^2 
  &= \Bigl(\sum_{k=1}^{N_h} \E [ \langle X_h^{j+1},\psi_k \rangle^2_H] \Bigr)^2 
  \leq  N_h \sum_{k=1}^{N_h} \E [ \langle X_h^{j+1},\psi_k \rangle^2_H]^2 \\
  &\leq N_h  \big\| \mathbb{E}[X_h^{j+1} \otimes X_h^{j+1} ] \big\|_{H^{(2)}}^2 
  \leq N_h \| \cS_j \cdots \cS_0 \|_{L(V_h^{(2)})}^2 \mathbb{E}[\|X_h^0 \|_H^2]^2. 
\end{align*}
%
For asymptotic mean-square stability, note that for any $\cF_0$-measurable initial value $X_h^0\in L^2(\Omega;V_h)$ it holds that $\lim_{j\to \infty} \E[X_h^j \otimes X_h^j] = 0$ if and only if
\begin{align*}
\lim_{j\to \infty} \| (\cS_j \cdots \cS_0) \E[X_h^0 \otimes X_h^0] \|_{V_h^{(2)}}
  =0,
\end{align*}
for which a sufficient condition is given by
$\lim_{j\to \infty} \| \cS_j \cdots \cS_0 \|_{L(V^{(2)}_h)} = 0$.
Therefore, the proof is finished.
\end{proof}

In many examples the operators $(\Dstoch{j},j\in\N_0)$ have a constant covariance, i.e., they satisfy for all $j \in \N_0$
\begin{align}\label{eq:stationary_covariance}
\E\left[  \Dstoch{j} \otimes \Dstoch{j}\right] = \E\left[  \Dstoch0 \otimes \Dstoch0  \right].
\end{align}
Often as in the following example they are even independent and identically distributed, which implies~\eqref{eq:stationary_covariance}.

\begin{example}
Consider the one-dimensional geometric Brownian motion driven by an adapt\-ed, real-valued Brownian motion $(\beta(t),t\geq 0)$ 
\begin{align*}
\dd X(t) = \lambda X(t) \dd t + \sigma X(t) \dd \beta(t), \qquad t\geq 0,
\end{align*}
with initial condition $X(0) = x_0 \in \R$ and $\lambda,\sigma\in\R$.
The solution can be approximated by the explicit Euler--Maruyama scheme
\begin{align*}
X_{j+1} &= X_j + \lambda \Delta t X_j + \sigma \Delta \beta^j X_j,
\end{align*}
for $j \in \N_0$, where $\Delta \beta^j = \beta(t_{j+1}) - \beta(t_j)$, or by the Milstein scheme
\begin{align*}
X_{j+1} = X_j + \lambda \Delta t X_j + \sigma \Delta \beta^j X_j + 2^{-1} \sigma^2 \left((\Delta \beta^j)^2 - \Delta t\right) X_j.
\end{align*}
Then the deterministic operators in~\eqref{FullDiscretScheme}
 \begin{align*}
  D_{\Delta t, \text{EM}}^{\operatorname{det}} 
    = D_{\Delta t, \text{Mil}}^{\operatorname{det}} 
    = 1 + \lambda \Delta t
 \end{align*}
are equal for both schemes, and the corresponding approximations of the stochastic integrals are given by
\begin{align*}
D_{\Delta t, \text{EM}}^{\operatorname{stoch},j} = \sigma \Delta \beta^j, \qquad
D_{\Delta t, \text{Mil}}^{\operatorname{stoch},j} = \sigma \Delta \beta^j + 2^{-1} \sigma^2 \left((\Delta \beta^j)^2 - \Delta t\right)
\end{align*}
for $j \in \N_0$.
Both families of stochastic approximation operators satisfy Assumption~\ref{ass:Dstoch}
and
\begin{align*}
 \E\left[D_{\Delta t, \text{EM}}^{\operatorname{stoch},j} \otimes D_{\Delta t, \text{EM}}^{\operatorname{stoch},j}\right]
  = \gs^2 \Delta t,
  \qquad
  \E\left[D_{\Delta t, \text{Mil}}^{\operatorname{stoch},j} \otimes D_{\Delta t, \text{Mil}}^{\operatorname{stoch},j}\right]
  = \gs^2 \Delta t\left(1 + 2^{-1} \gs^2 \Delta t\right)
\end{align*}
do not depend on~$j$.
We observe that the equidistant time step~$\Delta t$ is essential here.
\end{example}

Having this example in mind, we are able to give a necessary and sufficient condition for asymptotic mean-square stability when assuming~\eqref{eq:stationary_covariance} and therefore to specify Theorem~\ref{thm:generalApproxMSstab}. The condition relies on the spectrum of a single linear operator $\cS \in L(V^{(2)}_h)$.

\begin{corollary}\label{cor:generalApproxMSstabiid} 
Let $X_h = (X_h^j,j\in \N_0)$ given by~\eqref{FullDiscretScheme} satisfy Assumption~\ref{ass:Dstoch} and 
\eqref{eq:stationary_covariance}.
Then the zero solution of~\eqref{FullDiscretScheme} is asymptotically mean-square stable if and only if
\begin{align*}
\cS = \Ddet \otimes \Ddet + \E[ \Dstoch{0} \otimes \Dstoch{0}] \in L(V^{(2)}_h)
\end{align*}
satisfies $\rho( \cS) = \max_{i=1,\dots,N_h^2} |\lambda_i| < 1$, where $\lambda_1,\dots,\,\lambda_{N_h^2}$ are the eigenvalues of~$\cS$.

Furthermore, it
is asymptotically mean-square stable if $\norm{\cS}{L(V_h^{(2)})} < 1$.
\end{corollary}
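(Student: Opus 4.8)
The plan is to reduce the corollary to Theorem~\ref{thm:generalApproxMSstab} and a standard fact about powers of a single linear operator. Since assumption~\eqref{eq:stationary_covariance} holds, the operators $\cS_j$ from Theorem~\ref{thm:generalApproxMSstab} are all equal to the single operator $\cS = \Ddet \otimes \Ddet + \E[\Dstoch{0} \otimes \Dstoch{0}]$, so the product $\cS_j \cdots \cS_0$ collapses to $\cS^{j+1}$. As in the proof of Theorem~\ref{thm:generalApproxMSstab}, we have $\E[X_h^{j+1} \otimes X_h^{j+1}] = \cS^{j+1} \E[X_h^0 \otimes X_h^0]$, and by Lemma~\ref{lem:equivalenceConvergence} asymptotic mean-square stability is equivalent to $\E[X_h^j \otimes X_h^j] \to 0$ as $j \to \infty$ for every admissible initial value.

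For the sufficiency direction of the last sentence, if $\norm{\cS}{L(V_h^{(2)})} < 1$, then $\norm{\cS^{j+1}}{L(V_h^{(2)})} \le \norm{\cS}{L(V_h^{(2)})}^{j+1} \to 0$, so the hypothesis of Theorem~\ref{thm:generalApproxMSstab} is met and we are done immediately. For the equivalence with $\rho(\cS) < 1$: since $V_h^{(2)}$ is finite-dimensional ($\dim V_h^{(2)} = N_h^2$), $\cS$ is represented by an $N_h^2 \times N_h^2$ matrix, and it is a classical result (e.g., via the Jordan canonical form, or Gelfand's formula $\rho(\cS) = \lim_{n} \norm{\cS^n}{}^{1/n}$) that $\cS^n \to 0$ in operator norm if and only if $\rho(\cS) < 1$. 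Thus $\rho(\cS) < 1$ gives $\norm{\cS^{j+1}}{L(V_h^{(2)})} \to 0$, and Theorem~\ref{thm:generalApproxMSstab} yields asymptotic mean-square stability. I would also note that $\norm{\cS}{L(V_h^{(2)})} < 1$ trivially implies $\rho(\cS) < 1$, so the final sentence follows from the equivalence as well.

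For the necessity of $\rho(\cS) < 1$, I would argue by contraposition: suppose $\rho(\cS) \ge 1$, and let $\lambda$ be an eigenvalue with $|\lambda| \ge 1$. The subtlety is that eigenvectors of a real matrix may be complex, whereas $\E[X_h^0 \otimes X_h^0]$ must be a genuine (real, symmetric, positive) element of $V_h^{(2)}$ arising as a second moment, so one cannot simply plug in an eigenvector as an initial configuration. I expect this to be the main obstacle. One way around it: the operator $\cS$ maps the cone of symmetric positive semidefinite elements of $V_h^{(2)}$ into itself (being built from $\Ddet \otimes \Ddet$ and the covariance $\E[\Dstoch{0} \otimes \Dstoch{0}]$, both of which preserve this cone), and this cone spans $V_h^{(2)}$; hence by a Perron--Frobenius-type argument the spectral radius $\rho(\cS)$ is attained at a real eigenvalue with an eigenvector lying in the cone, i.e., a symmetric positive semidefinite element $M \ne 0$ with $\cS M = \rho(\cS) M$. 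Such an $M$ can be realized as $\E[X_h^0 \otimes X_h^0]$ for a suitable $V_h$-valued Gaussian initial condition $X_h^0$ scaled to have arbitrarily small second moment (using that $\delta$ in the definition can be taken as small as we like); then $\E[X_h^j \otimes X_h^j] = \cS^j M = \rho(\cS)^j M \not\to 0$, contradicting asymptotic mean-square stability via Lemma~\ref{lem:equivalenceConvergence}. Alternatively, and more robustly, one can avoid the cone argument by relating the real sequence $a_j = \E[\norm{X_h^j}{H}^2] = \trace(\E[X_h^j \otimes X_h^j])$ to $\cS^j$ directly: if $\cS^j \not\to 0$ then there is some symmetric positive semidefinite $M$ with $\cS^j M \not\to 0$, and since the map $M \mapsto \trace(\cS^j M)$ separates points on the symmetric part, one finds a positive semidefinite $M$ realized as an initial second moment for which $a_j \not\to 0$; I would present whichever of these two routes is cleaner, most likely the cone/Perron--Frobenius one since it directly produces a decaying-at-will initial condition that is not asymptotically stable.
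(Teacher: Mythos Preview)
Your approach is essentially the same as the paper's: set $\cS_j = \cS$, obtain $\E[X_h^{j+1}\otimes X_h^{j+1}] = \cS^{j+1}\E[X_h^0\otimes X_h^0]$ from the computation in Theorem~\ref{thm:generalApproxMSstab}, and then invoke the standard finite-dimensional fact that $\cS^j \to 0$ if and only if $\rho(\cS)<1$, together with $\rho(\cS)\le\|\cS\|_{L(V_h^{(2)})}$ for the last sentence.

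The one place where you go beyond the paper is the necessity direction. The paper simply asserts that ``$\lim_{j\to\infty}\E[X_h^j\otimes X_h^j]=0$ if and only if $\lim_{j\to\infty}\cS^j=0$, which is equivalent to $\rho(\cS)<1$'' and defers the justification to the references~\cite{B64,K12,BS12}. You correctly identify the subtlety that admissible initial second moments $\E[X_h^0\otimes X_h^0]$ only populate the cone of symmetric positive semidefinite tensors, not all of $V_h^{(2)}$, and you propose a Perron--Frobenius-type argument exploiting that $\cS$ is a positive (indeed completely positive) map on this cone to produce a PSD eigenvector at the spectral radius. That is precisely the kind of argument underlying the cited results, so your more explicit treatment is sound and fills in what the paper leaves to the literature; the alternative trace-based route you sketch also works but is less direct.
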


\begin{proof}
Setting $\cS_j = \cS$ for all $j \in \N_0$ in Theorem~\ref{thm:generalApproxMSstab}, we obtain by the same arguments
\begin{align*}
\E[X_h^{j+1} \otimes X_h^{j+1}] = \left(\cS_j  \cdots  \cS_0\right)\E[X_h^0 \otimes X_h^0] = \cS^{j+1} \E[X_h^0 \otimes X_h^0].
\end{align*} 
As a consequence, $\lim_{j\to \infty} \E[X_h^j \otimes X_h^j] = 0$ if and only if 
$\lim_{j\to \infty} \cS^{j} = 0$
which is equivalent to $\rho(\cS) < 1$ by the same arguments as, e.g., in~\cite{B64,K12,BS12}. This completes the proof of the first statement.
Since $\rho(\cS) \leq \| \cS \|_{L(V_h^{(2)})}$, a sufficient condition for asymptotic mean-square stability is given by $\norm{\cS}{L(V_h^{(2)})} < 1$.
\end{proof}

In the framework of SDE approximations, note that this corollary is an SPDE version formulated with operators of
the results for finite-dimensional linear systems in~\cite{BS12}.
There, the proposed method relies on a matrix eigenvalue problem. For SPDE approximations, this approach is not suitable, since the dimension of the considered eigenvalue problem increases heavily with space refinement. More precisely, for $h>0$, the spectral radius of an $(N_h^2 \times N_h^2)$-matrix has to be computed. To overcome this problem, we perform, in what follows, mean-square stability analysis of SPDE approximations based on operators as introduced above.

\section{Application to Galerkin methods}
\label{sec:applications-galerkin}

We continue by applying the previous results to the analysis of some classical numerical approximations of~\eqref{eq:SPDE}
which admits by results in~\cite[Chapter 9]{PZ07} an up to modification unique mild c\`adl\`ag solution and is for $t \ge 0$ given by
\begin{equation}\label{eq:SPDEmild}
X(t) = S(t) X_0 + \int^t_0 S(t-s)F(X(s)) \, \dd s+ \int^t_0 S(t-s)G(X(s)) \, \dd L(s).
\end{equation}
We assume further that the operator $-A: \cD(-A) \subset H \to H$ of~\eqref{eq:SPDE} is densely defined, self-adjoint, and positive definite with compact inverse.
This implies that $-A$ has a non-decreasing sequence of positive eigenvalues ${(\lambda_i, i \in \N)}$ for an orthonormal basis of eigenfunctions $(e_i, i \in \N)$ in $H$ and fractional powers of $-A$ are provided by
\begin{align*}
(-A)^{r/2} e_i = \lambda_{i}^{r/2} e_i
\end{align*}
for all $i \in \N$ and $r > 0$. For each $r>0$, $\dot{H}^r = \cD((-A)^{r/2})$ with inner product $\inpro{\cdot}{\cdot}{r}=\inpro{(-A)^{r/2}\cdot}{(-A)^{r/2}\cdot}{H}$ defines a separable Hilbert space (see, e.g.,~\cite[Appendix B]{K14}).

Let the sequence $(V_h, h \in (0,1])$ of finite-dimensional subspaces fulfil $V_h\subset\dot{H}^1\subset H$ 
and define the discrete operator $-A_h : V_h \to V_h$ by
\begin{align*}
 \inpro{-A_h v_h}{w_h}{H} = \inpro{v_h}{w_h}{1} = \inpro{(-A)^{1/2} v_h}{(-A)^{1/2} w_h}{H}
\end{align*}
for all $v_h, w_h \in V_h$. This definition implies that $-A_h$ is self-adjoint and positive definite on $V_h$ and therefore has a sequence of orthonormal eigenfunctions $(e_{h,i}, i = 1,\ldots,N_h)$ and positive non-decreasing eigenvalues $(\lambda_{h,i}, i = 1,\ldots,N_h)$ (see e.g.,~\cite[Chapter 3]{K14}). 
By using basic properties of the Rayleigh quotient, we bound the smallest eigenvalue $\lambda_{h,1}$ of $-A_h$ from below by the smallest eigenvalue $\lambda_1$ of $-A$ through
\begin{align}
\label{eq:eigenvalue_inequality}
\lambda_{h,1} = \min_{v_h \in V_h  \setminus \{ 0 \}} \frac{\inpro{v_h}{v_h}{1}}{\norm{v_h}{H}^2} \ge \min_{v \in H \setminus \{ 0 \}} \frac{\inpro{v}{v}{1}}{\norm{v}{H}^2} = \lambda_1,
\end{align}
since $V_h \subset H$, cf.~\cite{B10}. This estimate turns out to be useful when comparing asymptotic mean-square stability of~\eqref{eq:SPDE} and its approximation later in this section.

Let the covariance of the \levy process~$L$ be self-adjoint, positive semidefinite, and of trace class. Then results in~\cite[Chapter~4]{PZ07} imply the existence of an orthonormal basis $(f_i, i \in \N)$ of~$U$ and a non-increasing sequence of non-negative real numbers $(\mu_i, i \in \N)$ such that for all $i \in \N$, $Q f_i = \mu_i f_i$ with $\trace(Q) = \sum_{i=1}^\infty \mu_i < \infty$ and $L$ admits a \KL expansion
\begin{equation}
\label{eq:karhunenloeveexpansion}
L(t) = \sum^\infty_{i=1} \sqrt{\mu_i} L_i(t) f_i,
\end{equation}
where $(L_i, i \in \N)$ is a family of real-valued, square-integrable, uncorrelated \levy processes satisfying $\E[(L_i(t))^2]=t$ for all $t \ge 0$.
Note that due to the martingale property of~$L$, the real-valued \levy processes satisfy $\E[L_i(t)] = 0$ for all $t \ge 0$ and $i \in \N$. This implies, together with the stationarity of the \levy increments $\Delta L^j_i = L_i(t_{j+1})-L_i(t_j)$, that for all $i \in \N$ and $j \in \N_0$,
\begin{align*}
\E[\Delta L_i^j] = \E[\Delta L_1^0] = \E[L_1(\Delta t)] = 0.
\end{align*}
Since the series representation of $L$ can be infinite, an approximation of~$L$ might be required to implement a fully discrete approximation scheme, which is typically done by truncation of the \KL expansion, i.e., for $\kappa \in \mathbb{N}$, set 
$L^\kappa(t) = \sum_{i=1}^\kappa \sqrt{\mu_i} L_i(t) f_i$.
Note that the choice of $\kappa$ is essential and should be coupled with the overall convergence of the numerical scheme as is discussed in, e.g.,~\cite{BL12,BL12c,LPS14}. Within this work, we consider numerical methods based on the original \KL expansion~\eqref{eq:karhunenloeveexpansion} of~$L$. However, this does not restrict the applicability of the results since $L^\kappa$ fits in the framework by setting $\mu_i = 0$ for all $i > \kappa$.

As standard example in this context we consider the stochastic heat equation which is used for simulations in Section~\ref{sec:numerics}. 
\begin{example}[Stochastic heat equation] \label{Ex:stochastic_heat_equation}
Let the separable Hilbert space $H=L^2([0,1])$ be the space of square-integrable functions on $[0,1]$. On this space we consider the operator $A=\nu \Delta$, where $\nu > 0$ and  $\Delta$ denotes the Laplace operator with homogeneous zero Dirichlet boundary conditions which is the generator of a $C_0$-semigroup, cf.\ \cite[Example 2.21]{K14}. 
The equation 
\begin{align*}
 \dd X(t) = \nu \Delta X(t) \, \dd t + G(X(t)) \, \dd L(t)
\end{align*}
is referred to as the (homogeneous) stochastic heat equation.

It is known (see, e.g.,~\cite[Chapter 6]{K14}) that the eigenvalues and eigenfunctions of the operator $-A$ are given by
\begin{align*}
\lambda_i = \nu i^2\pi^2, \quad e_i(y) = \sqrt{2} \sin (i \pi y), \qquad i \in \N, y\in[0,1].
\end{align*}
We first assume, for simplicity, that $U=H=L^2([0,1])$ and that the operator $Q$ diagonalizes with respect to the eigenbasis of $-A$, i.e., $f_i = e_i$ for all $i \in \N$. For this choice, we consider the operator $G=G_1$ that gives rise to a \emph{geometric Brownian motion in infinite dimensions}, cf.\ \cite[Section 6.4]{K14}. It is for all $u,v \in H$ defined by the equation
\begin{align*}
G_1(v)u = \sum_{i=1}^\infty \langle v,e_i \rangle_H \langle u,e_i \rangle_H e_i .
\end{align*}
As a second example, we let $U=\dot{H}^1$ with the same diagonalization assumption as before, i.e., $f_i = \lambda^{1/2}_i e_i$ for all $i \in \N$. Here, we let the operator $G=G_2$ be a \emph{Nemytskii operator} which is defined pointwise for $x \in [0,1]$, $u \in \dot{H}^1$ and $v \in H$ by
\begin{align*}
(G_2 (v)u)[x] = v(x) u(x).
\end{align*}
To see that $G \in L(H;L(U;H))$ note that for $u,v \in H$, by the triangle inequality and Cauchy--Schwarz we have for $G_1$
\begin{align*}
\norm{G_1(v) u}{H} 
  &\le \sum_{i=1}^\infty |\langle v,e_i \rangle_H | |\langle u,e_{i} \rangle_H | 
  \le  \Bigl(\sum_{i=1}^\infty \langle v,e_i \rangle_H^2 \Bigr)^{1/2} 
	\Bigl(\sum_{i=1}^\infty \langle u,e_i \rangle_H ^2 \Bigr)^{1/2} = \norm{v}{H} \norm{u}{H}. 
\end{align*}
Next, for $G_2$ with $v \in H$ and $u \in \dot{H}^1$, it holds that
\begin{align*}
	\norm{G_2(v) u}{H}^2 
	  &= \int^1_0 u(x)^2 v(x)^2 \, \dd x 
	  = \int^1_0 \Bigl( \sum^\infty_{i=1} \lambda_i^{1/2} \inpro{u}{e_i}{H} \lambda_i^{-1/2} e_i (x) \Bigr)^2 v(x)^2 \, \dd x \\ 
	&\le \Bigl( \sum^\infty_{i=1} \lambda_i |\inpro{u}{e_i}{H}|^2 \Bigr) \int^1_0 \Bigl( \sum^\infty_{i=1} \lambda_i^{-1} e_i(x)^2 \Bigr) v(x)^2 \, \dd x \\
	&\le \norm{u}{\dot{H}^1}^{2} \Bigl( 2 \sum^\infty_{i=1} \lambda_i^{-1} \Bigr) \int^1_0 v(x)^2 \, \dd x  
	=  \Bigl(2 \sum^\infty_{i=1} \lambda_i^{-1} \Bigr) \norm{u}{\dot{H}^1}^{2} \norm{v}{H}^{2}.
\end{align*}
Here, the first inequality is an application of the Cauchy--Schwarz inequality, while the second follows from the fact that the sequence ${(|e_i(x)|, i \in \N)}$ is bounded by $\sqrt{2}$ for all $x \in [0,1]$.
Therefore, we obtain
\begin{align*}		
	\norm{G_1}{L(H;L(H))} \le 1,
\qquad 
\norm{G_2}{L(H;L(\dot{H}^1,H))} \le \Bigl( 2 \sum^\infty_{i=1} \lambda_i^{-1} \Bigr)^{1/2}.
\end{align*}
\end{example}


\subsection{Time discretization with rational approximations}\label{subsec:TimeRational}
 
Let us first recall that a \emph{rational approximation of order~$p$} of the exponential function is a rational function $R: \C \rightarrow \C$ satisfying that there exist constants $C,\delta > 0$ such that for all $z \in \C$ with $|z| < \delta$
\begin{align*}
 |R(z) - \exp(z)| \le C |z|^{p+1}.
\end{align*}
Since $R$ is rational there exist polynomials $r_n$ and $r_d$ such that $R = r_d^{-1} r_n$.
We want to consider rational approximations of the semigroup~$S$ generated by the operator~$-A$ and of its approximations~$-A_h$ as they were considered in~\cite{T06}.
With the introduced notation, the linear operator $R(\Delta t A_h)$ is given for all $v_h \in V_h$ by
\begin{align}
\label{eq:rational-linear-operator-approximation}
R(\Delta t A_h)v_h = r_\mathrm{d}^{-1}(\Delta t A_h) r_\mathrm{n}(\Delta t A_h)v_h = \sum_{k=1} ^{N_h} \frac{r_\mathrm{n}(-\Delta t \lambda_{h,k})}{r_\mathrm{d}(-\Delta t \lambda_{h,k})}\inpro{v_h}{e_{h,k}}{H}e_{h,k}.
\end{align}

Let us start with the mean-square stability properties of a Galerkin Euler--Maruyama method, which is given by the recursion
\begin{align}\label{schemeEMbased}
 X_h^{j+1} 
  &= (\Ddet + \Dpart{EM}{j}) X_h^j
\end{align}
 for $j \in\N_0$ with initial condition $X_h^0 = P_h X_0$, where 
\begin{align}\label{eq:abstractLinOpRational}
  \Ddet 
    = R(\Delta t A_h)+ r_\mathrm{d}^{-1}(\Delta t A_h) \Delta t P_hF,
  \quad 
  \Dstoch{j} 
    = \Dpart{EM}{j} 
    = r_\mathrm{d}^{-1}(\Delta t A_h) P_h G(\cdot) \Delta L^{j}
\end{align}
with $\Delta L^{j} = L(t_{j+1}) - L(t_j)$.
Note that the linear operators $(\Dstoch{j}, j \in \N_0)$ satisfy all assumptions of Corollary~\ref{cor:generalApproxMSstabiid} since they only depend on the \levy increments $(\Delta L^j,j\in\N_0)$. For this type of numerical approximation, the result from Corollary~\ref{cor:generalApproxMSstabiid} can be specified:

\begin{proposition}
\label{Proposition:MSStableFD}
The zero solution of the numerical method~\eqref{schemeEMbased} is asymptotically mean-square stable if and only if
\begin{align*}
\cS = \Ddet \otimes \Ddet + \Delta t \, (C \otimes C) q \in L(V^{(2)}_h)
\end{align*}
satisfies that $\rho(\cS)<1$, where $q = \sum_{k=1}^{\infty} \mu_k f_k \otimes f_k \in U^{(2)}$ and
$C \in L(U; L(V_h))$  with
\begin{align*}
 Cu = r_\mathrm{d}^{-1}(\Delta t A_h) P_h G(\cdot) u.
\end{align*}
\end{proposition}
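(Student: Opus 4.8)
The plan is to derive this statement from Corollary~\ref{cor:generalApproxMSstabiid}. Its hypotheses are Assumption~\ref{ass:Dstoch} and the constant-covariance condition~\eqref{eq:stationary_covariance}, which I would verify directly for the family $(\Dstoch{j}, j \in \N_0) = (\Dpart{EM}{j}, j \in \N_0)$ from~\eqref{eq:abstractLinOpRational}, and then I would identify the covariance operator $\E[\Dstoch{0}\otimes\Dstoch{0}] \in L(V_h^{(2)})$ with $\Delta t\,(C \otimes C) q$. Given both of these, the ``if and only if'' characterization via $\rho(\cS) < 1$ is precisely the conclusion of that corollary.

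The starting point for both tasks is to rewrite $\Dstoch{j} = C\,\Delta L^j$, where $\Delta L^j = L(t_{j+1}) - L(t_j) \in U$ and $C \in L(U; L(V_h))$ is the operator from the statement. Boundedness of $C$ is immediate from $r_\mathrm{d}^{-1}(\Delta t A_h) \in L(V_h)$ (which acts on the eigenbasis of $-A_h$ as in~\eqref{eq:rational-linear-operator-approximation}), $\norm{P_h}{L(H)} \le 1$, and $G \in L(H; L(U;H))$. Since $L$ is a square-integrable \levy martingale, $\Delta L^j$ is $\cF_{t_{j+1}}$-measurable, independent of $\cF_{t_j}$, of mean zero, and satisfies $\E[\norm{\Delta L^j}{U}^2] = \Delta t\,\trace(Q) < \infty$. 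Applying the bounded linear map $C$ and, on $U^{(2)}$, the continuous extension of $u\otimes v\mapsto (Cu)\otimes(Cv)$ to these facts yields all of Assumption~\ref{ass:Dstoch}: $\cF_{t_{j+1}}$-measurability of $\Dstoch{j}$, $\E[\Dstoch{j}\mid\cF_{t_j}] = 0$, $\E[\norm{\Dstoch{j}}{L(V_h)}^2] < \infty$, and $\E[\Dstoch{j}\otimes\Dstoch{j}\mid\cF_{t_j}] = \E[\Dstoch{j}\otimes\Dstoch{j}]$. Stationarity of the \levy increments makes the law of $\Dstoch{j} = C\Delta L^j$ independent of $j$, which gives~\eqref{eq:stationary_covariance}.

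For the covariance I would insert the \KL expansion~\eqref{eq:karhunenloeveexpansion} of the increment, $\Delta L^0 = \sum_{k=1}^\infty \sqrt{\mu_k}\,\Delta L^0_k\,f_k$, convergent in $L^2(\Omega;U)$, and use continuity of $C$ to write $\Dstoch{0} = \sum_{k=1}^\infty \sqrt{\mu_k}\,\Delta L^0_k\,(Cf_k)$ in $L^2(\Omega;L(V_h))$. Forming the tensor square, interchanging the double summation with the expectation, and using $\E[\Delta L^0_i\,\Delta L^0_k] = \Delta t\,\delta_{ik}$ --- which follows from the covariance structure of $L$ together with $\E[(L_k(t))^2] = t$ --- I obtain $\E[\Dstoch{0}\otimes\Dstoch{0}] = \Delta t\sum_{k=1}^\infty \mu_k\,(Cf_k)\otimes(Cf_k)$. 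Reading the right-hand side as $\Delta t\,(C\otimes C)q$, with $q = \sum_{k=1}^\infty \mu_k\,f_k\otimes f_k \in U^{(2)}$ and $C\otimes C$ the continuous linear extension of $u\otimes v\mapsto (Cu)\otimes(Cv)$ from $U^{(2)}$ into $L(V_h^{(2)})$, completes the identification; both $q$ and this operator series converge because $\trace(Q) = \sum_{k=1}^\infty \mu_k < \infty$ and $\norm{Cf_k}{L(V_h)} \le \norm{C}{L(U;L(V_h))}$.

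The step I expect to require the most care is this last identification: one must check that the double series representing $\Dstoch{0}\otimes\Dstoch{0}$ converges in $L^1(\Omega;L(V_h^{(2)}))$, so that expectation and summation commute; that the limiting series $\sum_{k=1}^\infty \mu_k\,(Cf_k)\otimes(Cf_k)$ converges in $L(V_h^{(2)})$; and that the embedding $L(V_h)\otimes L(V_h) \hookrightarrow L(V_h^{(2)})$ implicit in the notation $(C\otimes C)q$ is the intended one. All of this rests on the trace-class property of $Q$ together with the boundedness of $C$. The remaining verifications in the first two steps are routine consequences of the defining properties of a square-integrable \levy martingale.
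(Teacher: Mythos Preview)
Your proposal is correct and follows essentially the same approach as the paper: reduce to Corollary~\ref{cor:generalApproxMSstabiid} by identifying $\E[\Dstoch{0}\otimes\Dstoch{0}]$ with $\Delta t\,(C\otimes C)q$. The paper packages the computation slightly differently---it factors $\Dstoch{j}\otimes\Dstoch{j} = (C\otimes C)(\Delta L^j\otimes\Delta L^j)$ first, invokes the appendix Lemma~\ref{lem:appendix:em} for $\E[\Delta L^j\otimes\Delta L^j] = \Delta t\,q$, and cites an external result for the well-definedness of $C\otimes C\in L(U^{(2)};L(V_h^{(2)}))$ via $L(V_h)=L_{\textsc{HS}}(V_h)$---but the substance is the same as your Karhunen--Lo\`eve expansion argument.
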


\begin{proof}
Note that since $V_h$ is finite-dimensional, $L(V_h) = L_{\text{HS}} (V_h)$ so $(C \otimes C)$ is well-defined as an element of $L(U^{(2)}, L^{(2)}_{\text{HS}} (V_h)) \subset L(U^{(2)}, L(V^{(2)}_h))$ by \cite[Lemma 3.1(ii)]{KLL17},
which yields for $j\in\N$
\begin{align*}
\E[ &\Dpart{EM}{j} \otimes \Dpart{EM}{j} ] = \E [ C \Delta L^{j} \otimes C \Delta L^{j}] = ( C \otimes C ) \E [\Delta L^{j} \otimes \Delta L^{j}].
\end{align*}
Since $\E [\Delta L^{j} \otimes \Delta L^{j}] = \Delta t \, q$ by Lemma~\ref{lem:appendix:em}, 
the proof is completed with Corollary~\ref{cor:generalApproxMSstabiid}.
\end{proof}

The still rather abstract condition can be specified to an explicit sufficient condition.

\begin{corollary}\label{Cor:sufficientConditionGeneralRatApprox}
A sufficient condition for asymptotic mean-square stability of~\eqref{schemeEMbased} is
\begin{align*}
    &\Bigl( \max_{k = 1,\dots,N_h} |R(-\Delta t \lambda_{h,k})| + \max_{k = 1,\dots,N_h} |r_\mathrm{d}^{-1}(-\Delta t \lambda_{h,k})| \Delta t \norm{ F}{L(H)} \Bigr)^2 \\
      &\hspace{5cm}+   \max_{k = 1,\dots,N_h} |r_\mathrm{d}^{-1}(-\Delta t \lambda_{h,k})|^2 \Delta t \trace(Q) \norm{G}{L(H;L(U;H))}^2 < 1.
\end{align*}
\end{corollary}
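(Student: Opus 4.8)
The plan is to apply Corollary~\ref{Cor:sufficientConditionGeneralRatApprox}'s hypothesis to the sufficient condition from Proposition~\ref{Proposition:MSStableFD}, namely $\norm{\cS}{L(V_h^{(2)})} < 1$, by bounding $\norm{\cS}{L(V_h^{(2)})}$ from above through the triangle inequality applied to $\cS = \Ddet \otimes \Ddet + \Delta t\,(C \otimes C) q$. First I would estimate $\norm{\Ddet}{L(V_h)}$. Since $\Ddet = R(\Delta t A_h) + r_\mathrm{d}^{-1}(\Delta t A_h)\,\Delta t\, P_h F$, the triangle inequality gives $\norm{\Ddet}{L(V_h)} \le \norm{R(\Delta t A_h)}{L(V_h)} + \norm{r_\mathrm{d}^{-1}(\Delta t A_h)}{L(V_h)}\,\Delta t\,\norm{P_h F}{L(V_h)}$. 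By the spectral representation~\eqref{eq:rational-linear-operator-approximation}, for any rational function $g$ the operator $g(\Delta t A_h)$ is diagonalized by the eigenbasis $(e_{h,k})$, so $\norm{g(\Delta t A_h)}{L(V_h)} = \max_{k=1,\dots,N_h} |g(-\Delta t \lambda_{h,k})|$; applying this to $g = R$ and $g = r_\mathrm{d}^{-1}$, together with $\norm{P_h F}{L(V_h)} \le \norm{F}{L(H)}$ (since $P_h$ is an orthogonal projection and $\norm{P_h F}{L(V_h)} = \norm{P_h F P_h}{L(H)} \le \norm{F}{L(H)}$), yields the bound for the first summand's factor. Using the submultiplicativity of the tensor operator norm, $\norm{\Ddet \otimes \Ddet}{L(V_h^{(2)})} = \norm{\Ddet}{L(V_h)}^2$, which accounts for the squared term in the claimed inequality.

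Next I would bound the stochastic contribution $\Delta t\,\norm{(C\otimes C)q}{V_h^{(2)}}$. Since $L(V_h) = \LHS(V_h)$ in finite dimensions, one has $\norm{(C\otimes C)q}{L_{\text{HS}}^{(2)}(V_h)} \le \norm{C \otimes C}{L(U^{(2)}, L_{\text{HS}}^{(2)}(V_h))}\,\norm{q}{U^{(2)}}$, and by \cite[Lemma 3.1(ii)]{KLL17} the operator norm of $C \otimes C$ equals $\norm{C}{L(U;\LHS(V_h))}^2$. From the definition $Cu = r_\mathrm{d}^{-1}(\Delta t A_h) P_h G(\cdot) u$ one estimates $\norm{Cu}{\LHS(V_h)} = \norm{r_\mathrm{d}^{-1}(\Delta t A_h) P_h G(\cdot) u}{\LHS(V_h)} \le \norm{r_\mathrm{d}^{-1}(\Delta t A_h)}{L(V_h)}\,\norm{P_h}{L(H)}\,\norm{G}{L(H;L(U;H))}\,\norm{u}{U}$; here one must be slightly careful in interpreting $C$ as Hilbert--Schmidt-operator-valued, summing over an orthonormal basis of $V_h$ and using that $\|P_h\|_{L(H)}=1$ together with the trivial Hilbert--Schmidt bound on a finite-dimensional space, so that $\norm{C}{L(U;\LHS(V_h))} \le \max_{k} |r_\mathrm{d}^{-1}(-\Delta t \lambda_{h,k})|\,\norm{G}{L(H;L(U;H))}$. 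Finally $\norm{q}{U^{(2)}} = \bignorm{\sum_{k=1}^\infty \mu_k f_k \otimes f_k}{U^{(2)}}$; since $(f_k \otimes f_\ell)$ is an orthonormal system in $U^{(2)}$ and the $\mu_k$ are nonnegative, this norm is $(\sum_k \mu_k^2)^{1/2} \le \sum_k \mu_k = \trace(Q)$.

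Combining these bounds and again invoking submultiplicativity for the tensorized map,
\[
\norm{\cS}{L(V_h^{(2)})} \le \norm{\Ddet}{L(V_h)}^2 + \Delta t\,\norm{C}{L(U;\LHS(V_h))}^2\,\trace(Q),
\]
which is exactly the left-hand side of the claimed inequality; so if that quantity is $<1$ then $\norm{\cS}{L(V_h^{(2)})}<1$ and asymptotic mean-square stability follows from the last statement of Corollary~\ref{cor:generalApproxMSstabiid} (equivalently the final assertion of Proposition~\ref{Proposition:MSStableFD}). The main obstacle, such as it is, is bookkeeping: correctly tracking the identification $L(V_h)=\LHS(V_h)$ and the Hilbert--Schmidt interpretation of $C$ so that the constant from \cite[Lemma 3.1(ii)]{KLL17} really produces $\norm{C}{L(U;\LHS(V_h))}^2$ without an extra $N_h$-dependent factor, and verifying that $\norm{q}{U^{(2)}}$ is dominated by $\trace(Q)$ rather than something larger. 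Everything else is a direct chain of triangle and submultiplicativity inequalities together with the diagonal spectral representation~\eqref{eq:rational-linear-operator-approximation}.
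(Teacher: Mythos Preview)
Your treatment of the deterministic term $\Ddet\otimes\Ddet$ is fine and matches the paper. The gap is in the stochastic term. You route the estimate through $\norm{(C\otimes C)q}{L_{\text{HS}}^{(2)}(V_h)}\le \norm{C\otimes C}{L(U^{(2)};L_{\text{HS}}^{(2)}(V_h))}\norm{q}{U^{(2)}}$ and then claim
\[
\norm{Cu}{\LHS(V_h)} \le \norm{r_\mathrm{d}^{-1}(\Delta t A_h)}{L(V_h)}\,\norm{G}{L(H;L(U;H))}\,\norm{u}{U}.
\]
This is false in general: the assumption on $G$ only controls the \emph{operator} norm of $G(\cdot)u$, and passing to the Hilbert--Schmidt norm on $V_h$ costs a factor $\sqrt{N_h}$. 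Concretely, with $r_\mathrm{d}^{-1}=1$, $V_h=H=\R^{N_h}$, $U=\R$ and $G(v)u=u\,v$ one has $\norm{G}{L(H;L(U;H))}=1$ but $Cu=u\,\mathrm{Id}_{V_h}$ and $\norm{Cu}{\LHS(V_h)}=|u|\sqrt{N_h}$. So the ``trivial Hilbert--Schmidt bound'' you invoke really does introduce the $N_h$-dependent factor you were worried about, and your final inequality does not follow.

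The paper avoids this entirely by not bounding $\norm{C\otimes C}$ at all. It expands $q=\sum_k\mu_k f_k\otimes f_k$, applies the triangle inequality in $L(V_h^{(2)})$, and uses $\norm{(Cf_k)\otimes(Cf_k)}{L(V_h^{(2)})}=\norm{Cf_k}{L(V_h)}^2$ with the \emph{operator} norm throughout:
\[
\norm{(C\otimes C)q}{L(V_h^{(2)})}\le \sum_{k=1}^\infty \mu_k\,\norm{Cf_k}{L(V_h)}^2\le \trace(Q)\,\norm{C}{L(U;L(V_h))}^2.
\]
Then $\norm{C}{L(U;L(V_h))}\le \max_k|r_\mathrm{d}^{-1}(-\Delta t\lambda_{h,k})|\,\norm{G}{L(H;L(U;H))}$ is immediate from the operator-norm submultiplicativity, with no dimension-dependent constant. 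Replacing your second paragraph with this direct argument closes the gap; the rest of your outline is correct.
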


\begin{proof}
We first note that by the triangle inequality and the properties of the linear operator induced by the rational approximation $R$ defined in Equation~\eqref{eq:rational-linear-operator-approximation} 
we obtain that
\begin{align*}
\norm{\Ddet}{L(V_h)} &= \norm{R(\Delta t A_h)+ r_\mathrm{d}^{-1}(\Delta t A_h) \Delta t P_hF}{L(V_h)} \\
&\le\max_{k=1,\dots,N_h} |R(-\Delta t \lambda_{h,k})| + \max_{k = 1,\dots,N_h} |r_\mathrm{d}^{-1}(-\Delta t \lambda_{h,k})| \Delta t \norm{F}{L(H)}
\end{align*}
and, similarly
\begin{align*}
\norm{C}{L(U;L(V_h))} \le \norm{r_\mathrm{d}^{-1}(\Delta t A_h ) }{L(V_h)} \norm{G}{L(H;L(U;H))} \le \max_{k=1,\dots,N_h} |r_\mathrm{d}^{-1}(-\Delta t \lambda_{h,k})| \norm{G}{L(H;L(U;H))}.
\end{align*}
Since 

\begin{align*}
 \norm{(C \otimes C) q}{L(V_h^{(2)})}
  \le \sum_{k=1}^{\infty} \mu_k \norm{C f_k }{L(V_h)}^2
  \le \trace(Q) \norm{C}{L(U;L(V_h))}^2
\end{align*}
and $ \norm{\Ddet \otimes \Ddet}{L(V^{(2)}_h)} = \norm{\Ddet}{L(V_h)}^2$,
we obtain the claimed condition, which is sufficient by Corollary~\ref{cor:generalApproxMSstabiid}.
\end{proof}

We continue with the higher order Milstein scheme. Applying~\cite{BL12} in our context reads
\begin{align}\label{Eq:MilsteinScheme}
\begin{split}
X_h^{j+1} 
  = (\Ddet + \Dpart{EM}{j} + \Dpart{M}{j})X_h^j,
\end{split}
\end{align}
where $\Ddet$ and $\Dpart{EM}{j}$ are as in~\eqref{eq:abstractLinOpRational} and
\begin{align*}
\Dpart{M}{j} 
&=  \sum_{k,\ell = 1}^\infty r_\mathrm{d}^{-1}(\Delta t A_h) \sqrt{\mu_k \mu_\ell}P_h G(G(\cdot )f_k)f_\ell \int_{t_{j}}^{t_{j+1}} \int_{t_{j}}^s \dd L_k(r) \, \dd L_\ell(s).
\end{align*}


\begin{remark}
	\label{rem:g_commutativity}
In order to compute the iterated integrals of $\Dpart{M}{j}$, one may assume (cf.\ \cite{BL12,JR15}) that for all $H$-valued, adapted stochastic processes $\chi= (\chi(t),t\geq 0)$ and all $i,j\in \mathbb{N}$, the diffusion operator $G$ satisfies the commutativity condition
	\begin{align*}
	G(G(\chi)f_j ) f_i = G(G(\chi)f_i)f_j.
	\end{align*}
Under this assumption satisfied in Example~\ref{Ex:stochastic_heat_equation}, $\Dpart{M}{j}$ simplifies to
\begin{align*}
\Dpart{M}{j}
&= \frac{1}{2}\sum_{k,\ell = 1}^\infty \sqrt{\mu_k \mu_\ell} r_\mathrm{d}^{-1}(\Delta t A_h) P_h G(G(X_h^j)f_k)f_\ell (\Delta L_{k}^{j} \Delta L_{\ell}^{j} -  \Delta[L_k,L_\ell]^j),
\end{align*}
where 
$\Delta[L_k,L_\ell]^j = [L_k,L_\ell]_{t_{j+1}} - [L_k,L_\ell]_{t_{j}}$. Here, $[L_k,L_\ell]_{t}$ denotes the 
quadratic covariation of $L_k$ and $L_\ell$ evaluated at $t \ge 0$, which is straightforward to compute when $L_k, L_\ell$ are jump-diffusion processes (cf.\ \cite{BL12}). For the simulation of more general \levy processes in the context of SPDE approximation, we refer to \cite{DHP12,BS16}.
%
%
\end{remark}

As for the Euler--Maruyama scheme, Corollary~\ref{cor:generalApproxMSstabiid} can be specified for this Milstein scheme.

\begin{proposition}\label{Proposition:MilsteinRationalApproxS}
Assume that the bilinear mapping $C'(u_1,u_2) = r_\mathrm{d}^{-1}(\Delta t A_h) P_h G(G(\cdot)u_1)u_2$ for $u_1, u_2 \in U$ can be uniquely extended to a mapping $C' \in L(U^{(2)},L(V_h))$. Then the zero solution of~\eqref{Eq:MilsteinScheme} is asymptotically mean-square stable if and only if
\begin{align*}
 \cS = \Ddet \otimes \Ddet + \Delta t \, (C \otimes C) q +\frac{\Delta t^2}{2} (C' \otimes C') q'
\end{align*}
satisfies that $\rho(\cS) < 1$. 
Here, $q' = \sum_{k,\ell=1}^{\infty} \mu_k \mu_\ell(f_k \otimes f_{\ell}) \otimes (f_k \otimes f_{\ell}) \in U^{(4)}$ and $C$ and $q$ as in Proposition~\ref{Proposition:MSStableFD}.
\end{proposition}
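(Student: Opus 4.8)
The strategy is to mirror the proof of Proposition~\ref{Proposition:MSStableFD}, reducing everything to an application of Corollary~\ref{cor:generalApproxMSstabiid}. The recursion~\eqref{Eq:MilsteinScheme} has the form~\eqref{FullDiscretScheme} with $\Dstoch{j} = \Dpart{EM}{j} + \Dpart{M}{j}$, so I first check Assumption~\ref{ass:Dstoch} for this combined operator. Since both $\Dpart{EM}{j}$ and $\Dpart{M}{j}$ depend only on the \levy increments over $[t_j,t_{j+1})$, the family is $\cF$-compatible, the conditional expectation given $\cF_{t_j}$ is the unconditional one, and the conditional-mean-zero property holds because $\E[\Delta L_k^j]=0$ and $\E[\Delta L_k^j\Delta L_\ell^j - \Delta[L_k,L_\ell]^j]=0$ (the latter by the definition of the quadratic covariation and the martingale property). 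Square-integrability of $\Dstoch{j}$ in $L^2(\Omega;L(V_h))$ follows from trace-class-ness of $Q$ and boundedness of $G$ together with the assumed extendability of $C'$; finiteness of the relevant moments of the iterated \levy integrals is standard. Property~\eqref{eq:stationary_covariance} holds because the increments are stationary. Hence Corollary~\ref{cor:generalApproxMSstabiid} applies and asymptotic mean-square stability is equivalent to $\rho(\cS)<1$ for $\cS = \Ddet \otimes \Ddet + \E[\Dstoch0 \otimes \Dstoch0]$.

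**Computing the covariance operator.** It remains to identify $\E[\Dstoch0 \otimes \Dstoch0]$. Expanding, $\Dstoch0 \otimes \Dstoch0 = \Dpart{EM}{0}\otimes\Dpart{EM}{0} + \Dpart{EM}{0}\otimes\Dpart{M}{0} + \Dpart{M}{0}\otimes\Dpart{EM}{0} + \Dpart{M}{0}\otimes\Dpart{M}{0}$. The first term contributes $\Delta t\,(C\otimes C)q$ exactly as in Proposition~\ref{Proposition:MSStableFD}. The two cross terms vanish in expectation: $\Dpart{EM}{0}$ is linear in the first-order increments $\Delta L_k^0$, while $\Dpart{M}{0}$ is (after the commutativity simplification in Remark~\ref{rem:g_commutativity}) a combination of the centred products $\Delta L_k^0\Delta L_\ell^0 - \Delta[L_k,L_\ell]^0$; using independence/uncorrelatedness of the $L_i$ and the fact that $\E[\Delta L_k^0(\Delta L_a^0\Delta L_b^0 - \Delta[L_a,L_b]^0)] = 0$ — which follows from the martingale property, so that $\Delta L_a\Delta L_b - \Delta[L_a,L_b]$ has conditional mean zero and the \levy increments have no skewness contribution of the needed type — these mixed moments are zero. (One may also argue via It\^o isometry: the stochastic integral $\int\!\int \dd L\,\dd L$ is orthogonal in $L^2(\Omega)$ to a single increment.) The last term, $\E[\Dpart{M}{0}\otimes\Dpart{M}{0}]$, is handled by computing $\E\big[(\Delta L_k^0\Delta L_\ell^0 - \Delta[L_k,L_\ell]^0)(\Delta L_{k'}^0\Delta L_{\ell'}^0 - \Delta[L_{k'},L_{\ell'}]^0)\big]$; by uncorrelatedness of the $L_i$ and $\E[(L_i(t))^2]=t$, this is $(\Delta t)^2$ when $\{k,\ell\}=\{k',\ell'\}$ in the appropriate sense and $0$ otherwise, and an appeal to an appendix-type lemma (analogous to Lemma~\ref{lem:appendix:em}) gives $\E[\Dpart{M}{0}\otimes\Dpart{M}{0}] = \tfrac{\Delta t^2}{2}(C'\otimes C')q'$, the factor $\tfrac12$ coming from the $\tfrac12$ in the simplified $\Dpart{M}{j}$ and the combinatorics of unordered pairs absorbed into $q' = \sum_{k,\ell}\mu_k\mu_\ell (f_k\otimes f_\ell)\otimes(f_k\otimes f_\ell)$. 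Collecting the three surviving terms yields the stated $\cS$, and Corollary~\ref{cor:generalApproxMSstabiid} finishes the proof.

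**Main obstacle.** The routine parts (the $\cF$-compatibility checks, the vanishing of the EM term in $\E[\Dstoch0\otimes\Dstoch0]$ cross-products) are bookkeeping; the genuine work is the second-moment computation of the iterated-integral term and the careful tracking of indices so that the result collapses to $\tfrac{\Delta t^2}{2}(C'\otimes C')q'$ with exactly the right constant. The subtlety is that $\Dpart{M}{0}$ involves a double sum over $(k,\ell)$ with the bilinear map $G(G(\cdot)f_k)f_\ell$, and under the commutativity hypothesis the symmetrization is already built in; one must verify that the extension $C'\in L(U^{(2)},L(V_h))$ is compatible with this symmetrization and that $q'\in U^{(4)}$ is the correct fourth-order tensor encoding $\E$ of the product of centred increments — in particular that the "diagonal" structure of $q'$ (pairing $(f_k\otimes f_\ell)$ with itself) matches the uncorrelatedness of the $L_i$. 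The extendability assumption on $C'$ is precisely what makes $(C'\otimes C')q'$ well-defined as an element of $L(V_h^{(2)})$, via the same tensorization argument (\cite[Lemma 3.1(ii)]{KLL17}) used in Proposition~\ref{Proposition:MSStableFD}; I would state the iterated-integral moment identity as a separate lemma (parallel to Lemma~\ref{lem:appendix:em}) and cite it here to keep the proof short.
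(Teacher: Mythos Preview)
Your overall architecture matches the paper's: expand $\E[\Dstoch{0}\otimes\Dstoch{0}]$ into four pieces, identify the Euler--Maruyama piece with $\Delta t\,(C\otimes C)q$ via Proposition~\ref{Proposition:MSStableFD}, kill the two mixed pieces, compute the Milstein--Milstein piece, and invoke Corollary~\ref{cor:generalApproxMSstabiid}. That part is fine.

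The genuine gap is your use of the commutativity simplification from Remark~\ref{rem:g_commutativity}. Proposition~\ref{Proposition:MilsteinRationalApproxS} does \emph{not} assume commutativity of $G$; the only hypothesis is the extendability of $C'$ to $L(U^{(2)},L(V_h))$. Your argument, as written, passes to the representation $\tfrac12\sum_{k,\ell}\sqrt{\mu_k\mu_\ell}\,C'(f_k,f_\ell)(\Delta L_k\Delta L_\ell-\Delta[L_k,L_\ell])$, which is only available under the commutativity condition, so you have proved only a special case. Even in that special case your bookkeeping for the constant is shaky: with the factor $\tfrac14$ from squaring the $\tfrac12$, the covariance $\E[(I_{k\ell}+I_{\ell k})(I_{k'\ell'}+I_{\ell'k'})]=\Delta t^2(\delta_{kk'}\delta_{\ell\ell'}+\delta_{k\ell'}\delta_{\ell k'})$ produces two sums, and it is precisely the symmetry $C'(f_k,f_\ell)=C'(f_\ell,f_k)$ that collapses them into $\tfrac{\Delta t^2}{2}(C'\otimes C')q'$. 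Without commutativity this collapse fails and the expression does not reduce to the stated $\cS$.

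The paper avoids all of this by never leaving the iterated-integral form: it writes $\Dpart{M}{j}=C'\Delta^{(2)}L^j$ with $\Delta^{(2)}L=\sum_{k,\ell}\sqrt{\mu_k\mu_\ell}\bigl(\int_a^b\!\int_a^s\dd L_k\,\dd L_\ell\bigr)f_k\otimes f_\ell$, so that $\E[\Dpart{M}{j}\otimes\Dpart{EM}{j}]=(C'\otimes C)\E[\Delta^{(2)}L\otimes\Delta L]$ and $\E[\Dpart{M}{j}\otimes\Dpart{M}{j}]=(C'\otimes C')\E[\Delta^{(2)}L\otimes\Delta^{(2)}L]$. Both expectations are computed in Lemma~\ref{lem:appendix:mil} via the angle-bracket process: one shows $\langle L_j,L_\ell\rangle_s=\delta_{j\ell}\,s$, whence $\E[\Delta^{(2)}L\otimes\Delta L]=0$ (because $\E[L_k(s-)]=0$) and $\E[\Delta^{(2)}L\otimes\Delta^{(2)}L]=\tfrac{\Delta t^2}{2}q'$, the $\tfrac12$ arising transparently from $\int_0^{\Delta t}s\,\dd s$. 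This route needs no commutativity, no third or fourth moments of the L\'evy increments, and no symmetrization combinatorics; it is the fix you need.
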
  

\begin{proof}
Note that $C' \otimes C' : U^{(4)} \to L(V^{(2)}_h)$ and $C' \otimes C : U^{(2)} \otimes U \to L(V^{(2)}_h)$ are well-defined by the same arguments as in Proposition~\ref{Proposition:MSStableFD}. Since $\Dstoch{j} = \Dpart{EM}{j} + \Dpart{M}{j}$, we obtain for $j\in\mathbb{N}_0$
\begin{align*}
 \E[\Dstoch{j} \otimes \Dstoch{j}] 
  &= \E[\Dpart{EM}{j} \otimes \Dpart{EM}{j}] + \E[\Dpart{M}{j} \otimes \Dpart{EM}{j}] \\
    &\quad + \E[\Dpart{EM}{j} \otimes \Dpart{M}{j}] + \E[\Dpart{M}{j} \otimes \Dpart{M}{j}].
\end{align*}
The first term and $\Ddet\otimes \Ddet$ are given in Proposition~\ref{Proposition:MSStableFD}.
We conclude for the second term with Lemma~\ref{lem:appendix:mil},
writing $\Delta^{(2)} L = \sum_{k,\ell=1}^\infty \sqrt{\mu_k \mu_\ell} \left( \int_{t_{j}}^{t_{j+1}} \int_{t_{j}}^s \dd L_k(r) \, \dd L_\ell(s)  \right) f_k \otimes f_{\ell} $,
\begin{align*}
\E[\Dpart{M}{j} \otimes \Dpart{EM}{j}] 
  &= \E\bigl[C' \Delta^{(2)} L^j \otimes C \Delta L^j \bigr] 
  = (C' \otimes C) \E\bigl[\Delta^{(2)} L^j \otimes \Delta L^j\bigr]
  = 0
\end{align*}
and analogously the same for the third term.
Finally, Lemma~\ref{lem:appendix:mil} yields
\begin{align*}
 \E[\Dpart{M}{j} \otimes \Dpart{M}{j}] 
  = \E\bigl[C' \Delta^{(2)} L^j \otimes C' \Delta^{(2)} L^j \bigr] 
  = (C' \otimes C') \E\bigl[\Delta^{(2)} L^j \otimes \Delta^{(2)} L^j \bigr]
\end{align*}
and the statement follows directly from Corollary~\ref{cor:generalApproxMSstabiid}.
\end{proof}


\begin{remark}
The assumption on~$C'$ in Proposition~\ref{Proposition:MilsteinRationalApproxS}
holds for the operators $G_1$ and $G_2$ in the setting of Example~\ref{Ex:stochastic_heat_equation}. 
One can  get rid of this assumption by using that the bound on $G \in L(H;L(U;H))$ allows for an extension of the bilinear mapping to the \emph{projective tensor product space} $U \otimes_\pi U$, cf.~\cite{KLL17}. One would then have to assume additional regularity on $L$ to ensure that $\Delta^{(2)} L^j$ in the proof of Proposition~\ref{Proposition:MilsteinRationalApproxS} is in the space $L^2(\Omega;U \otimes_\pi U)$. Alternatively, one considers finite-dimensional \emph{truncated} noise, which leads to equivalent norms.
\end{remark}

\subsection{Examples of rational approximations}
\label{Subsec:ExamplesRationalApproximations}

Let us next consider specific choices of rational approximations~$R$ and investigate their influence on mean-square stability.
First, we derive sufficient conditions based on Corollary~\ref{Cor:sufficientConditionGeneralRatApprox} for Euler--Maruyama schemes with standard rational approximations. More specifically, we consider the backward Euler, the Crank--Nicolson, and the forward Euler scheme.
\begin{theorem}
\label{Thm:MSStableFD}
Consider the approximation scheme~\eqref{schemeEMbased}.
\begin{enumerate}
\item\label{prop-item:backwardEuler} \emph{(Backward Euler scheme)} Let $R$ be given by
$R(z) = (1-z)^{-1}$.
Then \eqref{schemeEMbased} is asymptotically mean-square stable if
\begin{align*}
\frac{(1 + \Delta t \norm{F}{L(H)})^2 + \Delta t \trace(Q) \norm{G}{L(H;L(U;H))}^2}{(1 + \Delta t \lambda_{h,1})^2} < 1.
\end{align*}
\item
 \emph{(Crank--Nicolson scheme)} Let $R$ be given by
$R(z) = (1 + z/2)/(1-z/2)$.
Then \eqref{schemeEMbased} is asymptotically mean-square stable if
\begin{align*}
\left(\max_{k \in \{ 1,N_h\}} \bigg| \frac{1- \Delta t \lambda_{h,k}/2}{1 + \Delta t \lambda_{h,k}/2} \bigg| + \Delta t \frac{\| F\|_{L(H)}}{(1 + \Delta t \lambda_{h,1}/2)}\right)^2 + \Delta t \frac{ \trace(Q) \| G \|_{L(H;L(U;H))}^2}{(1 + \Delta t \lambda_{h,1}/2)^2} < 1.
\end{align*}
\item
 \emph{(Forward Euler scheme)} Let $R$ be given by
$R(z) = 1+z$.
Then \eqref{schemeEMbased} is asymptotically mean-square stable if
\begin{align*}
\Bigl( \max_{\ell \in \{1,N_h\}} |1 - \Delta t \lambda_{h,\ell}| + \Delta t \norm{F}{L(H)} \Bigr)^2 + \Delta t \trace(Q) \norm{G}{L(H;L(U;H))}^2 < 1.
\end{align*}
\end{enumerate}
\end{theorem}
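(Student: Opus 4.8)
The plan is to apply Corollary~\ref{Cor:sufficientConditionGeneralRatApprox} to each of the three rational approximations in turn, since each item is a concrete instance of the abstract sufficient condition stated there. In each case the work reduces to computing (or bounding) the two quantities $\max_{k=1,\dots,N_h}|R(-\Delta t\lambda_{h,k})|$ and $\max_{k=1,\dots,N_h}|r_\mathrm{d}^{-1}(-\Delta t\lambda_{h,k})|$ in terms of the eigenvalues $\lambda_{h,k}$ of $-A_h$, and then substituting into the inequality of Corollary~\ref{Cor:sufficientConditionGeneralRatApprox}.

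For the backward Euler scheme, $R(z)=(1-z)^{-1}$ so $r_\mathrm{n}\equiv 1$ and $r_\mathrm{d}(z)=1-z$; evaluated at $z=-\Delta t\lambda_{h,k}$ this gives $r_\mathrm{d}(-\Delta t\lambda_{h,k})=1+\Delta t\lambda_{h,k}$. Since the eigenvalues are positive and non-decreasing, both $|R(-\Delta t\lambda_{h,k})|=(1+\Delta t\lambda_{h,k})^{-1}$ and $|r_\mathrm{d}^{-1}(-\Delta t\lambda_{h,k})|=(1+\Delta t\lambda_{h,k})^{-1}$ are maximized at $k=1$, yielding the common bound $(1+\Delta t\lambda_{h,1})^{-1}$. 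Plugging this into Corollary~\ref{Cor:sufficientConditionGeneralRatApprox} and collecting the common denominator gives exactly the stated fraction. For the Crank--Nicolson scheme, $r_\mathrm{d}(z)=1-z/2$, so $|r_\mathrm{d}^{-1}(-\Delta t\lambda_{h,k})|=(1+\Delta t\lambda_{h,k}/2)^{-1}$ is again maximized at $k=1$; the term $|R(-\Delta t\lambda_{h,k})|=|(1-\Delta t\lambda_{h,k}/2)/(1+\Delta t\lambda_{h,k}/2)|$ is a function whose modulus, as a function of the positive real variable $\lambda_{h,k}$, is monotone on each side of $\lambda_{h,k}=2/\Delta t$, hence its maximum over the discrete set is attained at one of the extreme indices $k\in\{1,N_h\}$. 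For the forward Euler scheme, $r_\mathrm{d}\equiv 1$ so $|r_\mathrm{d}^{-1}(-\Delta t\lambda_{h,k})|=1$, while $|R(-\Delta t\lambda_{h,k})|=|1-\Delta t\lambda_{h,k}|$ is piecewise monotone in $\lambda_{h,k}$ with a minimum at $\lambda_{h,k}=1/\Delta t$, so again the maximum is at $k\in\{1,N_h\}$. Substituting these into Corollary~\ref{Cor:sufficientConditionGeneralRatApprox} gives items~(2) and~(3) directly.

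The one genuinely non-routine point is justifying that for Crank--Nicolson and forward Euler it suffices to take the maximum over $k\in\{1,N_h\}$ rather than over all $k$: this is the observation that $z\mapsto|1-z|$ and $z\mapsto|(1-z/2)/(1+z/2)|$, viewed as functions of the real argument $z=\Delta t\lambda_{h,k}>0$, each first decrease and then increase, so on any finite set of positive reals the maximum is at the smallest or the largest element, i.e.\ at $\lambda_{h,1}$ or $\lambda_{h,N_h}$. (For backward Euler this subtlety does not arise because $(1+z)^{-1}$ is monotone.) Once this elementary monotonicity fact is in place, each of the three statements follows by a direct substitution into the sufficient condition of Corollary~\ref{Cor:sufficientConditionGeneralRatApprox}, and there is no further obstacle.
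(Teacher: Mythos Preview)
Your proposal is correct and follows essentially the same approach as the paper: both reduce each case to Corollary~\ref{Cor:sufficientConditionGeneralRatApprox} by identifying $r_\mathrm{n}$ and $r_\mathrm{d}$, bounding $\max_k|r_\mathrm{d}^{-1}(-\Delta t\lambda_{h,k})|$ by monotonicity, and observing that the piecewise-monotone shape of $|R(-\Delta t\lambda_{h,k})|$ for the Crank--Nicolson and forward Euler schemes forces the maximum to occur at $k\in\{1,N_h\}$. The only cosmetic difference is that the paper phrases the Crank--Nicolson monotonicity argument in terms of $R$ being decreasing on $\mathbb{R}^-$ with values in $[-1,1]$, whereas you argue directly with the positive variable $\Delta t\lambda_{h,k}$; the content is identical.
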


\begin{proof}
Let us start with the backward Euler scheme. Since the functions $r_\mathrm{d}^{-1}(z)$ and $R(z)$ are equal and it holds for all $k = 1, \dots,N_h$ that $|R(-\Delta t \lambda_{h,k} )| \le |R(-\Delta t \lambda_{h,1})|$, we obtain by Corollary~\ref{Cor:sufficientConditionGeneralRatApprox} asymptotic mean-square stability if
\begin{align*}
(1 + \Delta t \lambda_{h,1})^{-2} \left( (1 + \Delta t \norm{F}{L(H)})^2 + \Delta t \trace(Q) \norm{G}{L(H;L(U;H))}^2 \right) < 1.
\end{align*}

For the Crank--Nicolson scheme, note that $R$ is decreasing on $\R^-$ and that $R(z) \in [-1,1]$ for all $z\in\R^-$. Thus, the maximizing eigenvalue is either the largest, $\lambda_{h,N_h}$, or the smallest, $\lambda_{h,1}$, and therefore, 
\begin{align*}
| R(- \Delta t \lambda_{h,k})| \leq \max_{\ell\in\{1,N_h\}} | R (- \Delta t \lambda_{h,\ell})|.
\end{align*} 
Since $|r_\mathrm{d}^{-1}(- \Delta t \lambda_{h,k})| \leq |r_\mathrm{d}^{-1}(- \Delta t \lambda_{h,1})|$ for all $k = 1,\dots,N_h$, the claim follows with Corollary~\ref{Cor:sufficientConditionGeneralRatApprox}.

By the same arguments, 
we obtain for the forward Euler scheme that $|R(-\Delta t \lambda_{h,i})|$ is maximized either at $z=-\Delta t\lambda_{h,1}$ or $z=-\Delta t\lambda_{h,N_h}$. Therefore, since $r_\mathrm{d}^{-1} (z) = 1$, the claim follows again with Corollary~\ref{Cor:sufficientConditionGeneralRatApprox},
which finishes the proof.
\end{proof}

For the Milstein scheme, Proposition~\ref{Proposition:MilsteinRationalApproxS} yields the following sufficient condition.
\begin{proposition}\label{Prop:MSStableMilstein}
Under the assumptions of Proposition~\ref{Proposition:MilsteinRationalApproxS}, the Milstein scheme~\eqref{Eq:MilsteinScheme} with $R(z) = (1-z)^{-1}$ is asymptotically mean-square stable if
\begin{align*}
(1 + \Delta t \norm{F}{L(H)})^2 + \Delta t \trace (Q) \norm{G}{L(H;L(U;H))}^2 
  +  \frac{\Delta t^2}{2} \trace (Q)^2 &\norm{G}{L(H;L(U;H))}^4 \\
  &\hspace{1.5cm}< (1+\Delta t \lambda_{h,1})^2.
\end{align*}
\end{proposition}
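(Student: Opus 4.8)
The plan is to invoke the sufficient part of Proposition~\ref{Proposition:MilsteinRationalApproxS} in combination with Corollary~\ref{cor:generalApproxMSstabiid}: since $\rho(\cS)\le\norm{\cS}{L(V_h^{(2)})}$, it is enough to verify $\norm{\cS}{L(V_h^{(2)})}<1$ for
$\cS = \Ddet \otimes \Ddet + \Delta t\,(C\otimes C)q + \tfrac{\Delta t^2}{2}(C'\otimes C')q'$. By the triangle inequality I would estimate the three contributions separately, reusing for the first two exactly the bounds already established in the proof of Corollary~\ref{Cor:sufficientConditionGeneralRatApprox}.

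For the backward Euler choice $R(z)=r_\mathrm{d}^{-1}(z)=(1-z)^{-1}$ one has $R(-\Delta t\lambda_{h,k})=(1+\Delta t\lambda_{h,k})^{-1}$, which is decreasing in $\lambda_{h,k}$, so both maxima in Corollary~\ref{Cor:sufficientConditionGeneralRatApprox} are attained at $\lambda_{h,1}$. This gives $\norm{\Ddet\otimes\Ddet}{L(V_h^{(2)})}=\norm{\Ddet}{L(V_h)}^2\le (1+\Delta t\lambda_{h,1})^{-2}(1+\Delta t\norm{F}{L(H)})^2$ and, for the Euler--Maruyama term, $\norm{(C\otimes C)q}{L(V_h^{(2)})}\le\trace(Q)\norm{C}{L(U;L(V_h))}^2\le (1+\Delta t\lambda_{h,1})^{-2}\trace(Q)\norm{G}{L(H;L(U;H))}^2$.

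The genuinely new ingredient is the Milstein term. Writing $q'=\sum_{k,\ell=1}^\infty \mu_k\mu_\ell (f_k\otimes f_\ell)\otimes(f_k\otimes f_\ell)$, one has $(C'\otimes C')q'=\sum_{k,\ell}\mu_k\mu_\ell\,C'(f_k,f_\ell)\otimes C'(f_k,f_\ell)$, so $\norm{B\otimes B}{L(V_h^{(2)})}=\norm{B}{L(V_h)}^2$ and the triangle inequality yield $\norm{(C'\otimes C')q'}{L(V_h^{(2)})}\le\sum_{k,\ell}\mu_k\mu_\ell\norm{C'(f_k,f_\ell)}{L(V_h)}^2$. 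Applying the bound defining $G\in L(H;L(U;H))$ twice together with boundedness of $P_h$ and of $r_\mathrm{d}^{-1}(\Delta t A_h)$ gives $\norm{C'(u_1,u_2)v}{H}\le\norm{r_\mathrm{d}^{-1}(\Delta t A_h)}{L(V_h)}\norm{G}{L(H;L(U;H))}^2\norm{u_1}{U}\norm{u_2}{U}\norm{v}{H}$, and since $\norm{f_k}{U}=1$ the double sum is bounded by $\bigl(\sum_k\mu_k\bigr)^2\norm{r_\mathrm{d}^{-1}(\Delta t A_h)}{L(V_h)}^2\norm{G}{L(H;L(U;H))}^4=(1+\Delta t\lambda_{h,1})^{-2}\trace(Q)^2\norm{G}{L(H;L(U;H))}^4$. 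Adding the three estimates and multiplying by $(1+\Delta t\lambda_{h,1})^2$ turns $\norm{\cS}{L(V_h^{(2)})}<1$ into the asserted inequality, and Corollary~\ref{cor:generalApproxMSstabiid} then gives asymptotic mean-square stability.

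The only point requiring genuine care, I expect, is the Milstein term: one should check that the extension $C'\in L(U^{(2)},L(V_h))$ postulated in Proposition~\ref{Proposition:MilsteinRationalApproxS} agrees on the simple tensors $f_k\otimes f_\ell$ with the bilinear map $C'(\cdot,\cdot)$, so that the pointwise bound above is legitimate, and to keep track correctly of the factor $\trace(Q)^2$ arising from the double \KL sum. Everything else is a routine rearrangement of inequalities already proved.
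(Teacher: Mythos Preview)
Your proposal is correct and follows essentially the same approach as the paper: bound $\norm{\cS}{L(V_h^{(2)})}$ by the triangle inequality, reuse the estimates of Corollary~\ref{Cor:sufficientConditionGeneralRatApprox} for the deterministic and Euler--Maruyama contributions, and control the Milstein term $(C'\otimes C')q'$ by $(1+\Delta t\lambda_{h,1})^{-2}\trace(Q)^2\norm{G}{L(H;L(U;H))}^4$, then conclude via Corollary~\ref{cor:generalApproxMSstabiid}. The only cosmetic difference is that the paper phrases the Milstein bound through the operator norm $\norm{C'}{L(U^{(2)};L(V_h))}$ rather than expanding $q'$ termwise; your caveat about the extension agreeing with the bilinear map on simple tensors is immediate from the definition of the extension assumed in Proposition~\ref{Proposition:MilsteinRationalApproxS}.
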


\begin{proof}
In the same way as in the proof of Corollary~\ref{Cor:sufficientConditionGeneralRatApprox}, we bound 
\begin{align*}
 \| (C' \otimes C') q' \|_{L(V^{(2)}_h)} \le \|C' \|^2_{L(U^{(2)};L(V_h))} \trace(Q)^2 \le (1+\Delta t \lambda_{h,1})^{-2} \norm{G}{L(H;L(U;H))}^4 \trace(Q)^2.
\end{align*}
Hence, our assumption ensures that $\|\cS \|_{L(V_h^{(2)})} <1$, which by Corollary~\ref{cor:generalApproxMSstabiid} is sufficient for asymptotic mean-square stability.
\end{proof}
Note that the sufficient condition for the Milstein scheme is more restrictive than the sufficient condition presented in Theorem~\ref{Thm:MSStableFD}(\ref{prop-item:backwardEuler}) for the backward Euler--Maruyama method due to the additional positive term in the estimate in Proposition~\ref{Prop:MSStableMilstein}. 

\subsection{Relation to the mild solution}\label{subsec:parallel_stab_solution_approx}

To connect existing results on asymptotic mean-square stability of~\eqref{eq:SPDE} to the results for discrete schemes in Section~\ref{Subsec:ExamplesRationalApproximations}, 
we have to restrict ourselves to $Q$-Wiener processes $W = (W(t),t\geq 0)$ due to the framework for analytical solutions in~\cite{L06}. Specifically, we consider
\begin{align}\label{eq:SPDEWiener}
\dd X(t) = (AX(t) + F X(t)) \, \dd t + G(X(t)) \, \dd W(t).
\end{align}

The following special case of \cite[Proposition 3.1.1]{L06} provides a sufficient condition for the asymptotic mean-square stability of~\eqref{eq:SPDE} by a Lyapunov functional approach.
\begin{theorem}\label{thm:ms_stability_Lyapunov}
Assume that $X_0=x_0 \in \dot{H}^1$ is deterministic and there exists $c>0$ such that
\begin{align*}
2\langle v,A v + F(v)\rangle_H  + \trace[G(v)Q(G(v))^* ] \leq -c\|v\|^2_H
\end{align*}
 for all $v\in \dot{H}^2$. Then the zero solution of~\eqref{eq:SPDEWiener} is asymptotically mean-square stable.
\end{theorem}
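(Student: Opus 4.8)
The plan is to apply the classical Lyapunov/Itô argument from the finite-dimensional theory, adapted to the Hilbert-space mild-solution setting of~\cite{L06}. First I would let $X=(X(t),t\geq 0)$ denote the mild solution of~\eqref{eq:SPDEWiener} with the deterministic initial value $x_0\in\dot H^1$, and consider the functional $V(v)=\|v\|_H^2$. The key point is that for sufficiently regular solutions one has, by Itô's formula applied to $t\mapsto\|X(t)\|_H^2$,
\begin{align*}
\E\bigl[\|X(t)\|_H^2\bigr]
 = \|x_0\|_H^2 + \int_0^t \E\Bigl[2\langle X(s),AX(s)+F(X(s))\rangle_H + \trace\bigl(G(X(s))Q(G(X(s)))^*\bigr)\Bigr]\,\dd s,
\end{align*}
the stochastic integral term having zero expectation because $L=W$ is a square-integrable martingale. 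The structural assumption then gives the integrand pointwise bound $\le -c\,\E[\|X(s)\|_H^2]$, so with $\varphi(t)=\E[\|X(t)\|_H^2]$ we obtain $\varphi(t)\le\|x_0\|_H^2-c\int_0^t\varphi(s)\,\dd s$, and Gronwall's inequality yields $\varphi(t)\le e^{-ct}\|x_0\|_H^2\to 0$; mean-square stability (the $\varepsilon$--$\delta$ part) is immediate from $\varphi(t)\le\|x_0\|_H^2$.

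The main obstacle is that Itô's formula does not apply directly to the mild solution, since $X(t)$ need not take values in $\cD(A)=\dot H^2$ and $t\mapsto X(t)$ need not be a semimartingale in $H$. The standard remedy, which is exactly what~\cite[Proposition 3.1.1]{L06} is built to handle, is to regularize: introduce the Yosida approximations $A_n = nA(nI-A)^{-1}$ (or equivalently work with $(nI-A)^{-1}X$), for which the corresponding equations have strong solutions $X_n$ to which Itô's formula legitimately applies, derive the energy identity for $X_n$, use the dissipativity bound together with the uniform bound $\|(nI-A)^{-1}\|_{L(H)}\le C$ and the convergence $X_n\to X$ in $L^2(\Omega;H)$ uniformly on bounded time intervals, and pass to the limit. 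Here the hypothesis that the estimate holds for all $v\in\dot H^2$ (rather than merely on a core) is what makes the limiting argument go through, since the regularized solutions live in $\dot H^2$.

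Since this regularization machinery is precisely the content of~\cite[Proposition 3.1.1]{L06}, I would not reproduce it; instead I would verify that~\eqref{eq:SPDEWiener} fits that proposition's hypotheses. Concretely, one checks: $A$ generates a $C_0$-semigroup (in fact an analytic one, by the standing assumptions on $-A$); the drift $v\mapsto Av+F(v)$ and the diffusion $v\mapsto G(v)$ satisfy the requisite linear-growth/Lipschitz conditions ($F\in L(H)$ and $G\in L(H;L(U;H))$ give this at once, with the $\dot H^1$ regularity of $x_0$ ensuring the solution has enough regularity for the cited result); and that the Lyapunov function $V(v)=\|v\|_H^2$ together with the assumed coercive estimate
\begin{align*}
2\langle v,Av+F(v)\rangle_H + \trace\bigl(G(v)Q(G(v))^*\bigr)\le -c\|v\|_H^2
\end{align*}
satisfies the drift condition $(\mathcal L V)(v)\le -c V(v)$ of the Lyapunov theorem, where $\mathcal L$ is the generator of the corresponding Markov semigroup. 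The conclusion of~\cite[Proposition 3.1.1]{L06} then gives exponential decay of $\E[V(X(t))]=\E[\|X(t)\|_H^2]$, hence asymptotic mean-square stability of the zero solution, completing the proof.
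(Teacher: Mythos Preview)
Your proposal is correct and aligns with the paper's treatment: the paper states this theorem as a special case of \cite[Proposition~3.1.1]{L06} and gives no proof beyond that citation, which is exactly what you propose to do after verifying the hypotheses. Your additional discussion of the Yosida-regularization machinery behind that proposition is accurate but goes beyond what the paper itself records.
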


We use this theorem to derive simultaneous sufficient mean-square stability conditions for~\eqref{eq:SPDEWiener} and the corresponding backward Euler scheme~\eqref{schemeEMbased}.

\begin{corollary}\label{cor:stability_connection}
Assume that $X_0=x_0 \in \dot{H}^1$ is deterministic. Then
the zero solutions of~\eqref{eq:SPDEWiener} and its approximation~\eqref{schemeEMbased} with $R(z) = (1-z)^{-1}$ are asymptotically mean-square stable for all $h$ and $\Delta t$ if
\begin{equation}
\label{eq:simultaneous_stability}
2\left( \norm{F}{L(H)}- \lambda_1 \right) + \trace(Q) \norm{G}{L(H;L(U;H))}^2 < 0.
\end{equation}
\end{corollary}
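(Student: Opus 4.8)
The plan is to verify the two sufficient conditions separately, one for the continuous equation via Theorem~\ref{thm:ms_stability_Lyapunov} and one for the backward Euler scheme via Theorem~\ref{Thm:MSStableFD}(\ref{prop-item:backwardEuler}), and then observe that the single hypothesis~\eqref{eq:simultaneous_stability} implies both. First I would treat the continuous equation: for $v \in \dot H^2$ one has $\langle v, Av\rangle_H = -\|(-A)^{1/2}v\|_H^2 = -\|v\|_1^2 \le -\lambda_1 \|v\|_H^2$ by the variational characterization of $\lambda_1$ (the Rayleigh quotient bound, analogous to~\eqref{eq:eigenvalue_inequality} but on all of $H$), while $\langle v, F(v)\rangle_H \le \|F\|_{L(H)} \|v\|_H^2$ by Cauchy--Schwarz and boundedness of $F$, and $\trace[G(v)Q G(v)^*] \le \trace(Q)\,\|G(v)\|_{L(U;H)}^2 \le \trace(Q)\,\|G\|_{L(H;L(U;H))}^2 \|v\|_H^2$. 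Summing, the left-hand side of the Lyapunov condition in Theorem~\ref{thm:ms_stability_Lyapunov} is bounded above by $\bigl(2(\|F\|_{L(H)} - \lambda_1) + \trace(Q)\|G\|_{L(H;L(U;H))}^2\bigr)\|v\|_H^2$, so~\eqref{eq:simultaneous_stability} provides the required strictly negative constant $c$ and yields asymptotic mean-square stability of~\eqref{eq:SPDEWiener}.

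Next I would handle the discrete scheme. By Theorem~\ref{Thm:MSStableFD}(\ref{prop-item:backwardEuler}), the backward Euler scheme is asymptotically mean-square stable whenever
\begin{align*}
(1 + \Delta t \norm{F}{L(H)})^2 + \Delta t \trace(Q) \norm{G}{L(H;L(U;H))}^2 < (1 + \Delta t \lambda_{h,1})^2.
\end{align*}
Expanding both squares and cancelling the common constant term $1$, this is equivalent to
\begin{align*}
2\Delta t \norm{F}{L(H)} + \Delta t^2 \norm{F}{L(H)}^2 + \Delta t \trace(Q)\norm{G}{L(H;L(U;H))}^2 < 2 \Delta t \lambda_{h,1} + \Delta t^2 \lambda_{h,1}^2,
\end{align*}
i.e., after dividing by $\Delta t > 0$,
\begin{align*}
2(\norm{F}{L(H)} - \lambda_{h,1}) + \trace(Q)\norm{G}{L(H;L(U;H))}^2 < \Delta t(\lambda_{h,1}^2 - \norm{F}{L(H)}^2).
\end{align*}
Using the eigenvalue inequality~\eqref{eq:eigenvalue_inequality}, $\lambda_{h,1} \ge \lambda_1$, so the left-hand side here is at most $2(\norm{F}{L(H)} - \lambda_1) + \trace(Q)\norm{G}{L(H;L(U;H))}^2$, which is strictly negative by assumption~\eqref{eq:simultaneous_stability}. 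It therefore suffices to show the right-hand side is nonnegative, i.e., $\lambda_{h,1} \ge \norm{F}{L(H)}$; but~\eqref{eq:simultaneous_stability} gives $\norm{F}{L(H)} < \lambda_1 \le \lambda_{h,1}$ (since the other two terms are nonnegative), so indeed $\lambda_{h,1}^2 - \norm{F}{L(H)}^2 \ge 0$ and the strict inequality holds for every $\Delta t > 0$ and every $h$.

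The main subtlety, and the step that requires the small auxiliary argument above, is that the sufficient condition of Theorem~\ref{Thm:MSStableFD}(\ref{prop-item:backwardEuler}) contains $\Delta t$-dependent quadratic terms, so one must check that~\eqref{eq:simultaneous_stability} controls it uniformly in $\Delta t$ and $h$; this is exactly where the sign condition $\norm{F}{L(H)} < \lambda_1$ — a free consequence of~\eqref{eq:simultaneous_stability} — does the work, turning the leftover $\Delta t$-term into a nonnegative quantity. Once both the continuous and discrete statements are in place, the corollary follows immediately by combining them. I would also remark that the assumption $X_0 = x_0 \in \dot H^1$ deterministic is needed only to invoke Theorem~\ref{thm:ms_stability_Lyapunov} on the continuous side; the scheme~\eqref{schemeEMbased} is initialized with $X_h^0 = P_h x_0$, which is consistent with this choice.
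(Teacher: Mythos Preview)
Your proof is correct and follows essentially the same approach as the paper: verify the Lyapunov condition of Theorem~\ref{thm:ms_stability_Lyapunov} by bounding the three terms exactly as you do, and then reduce the backward Euler condition of Theorem~\ref{Thm:MSStableFD}(\ref{prop-item:backwardEuler}) to~\eqref{eq:simultaneous_stability} via~\eqref{eq:eigenvalue_inequality} and the consequence $\norm{F}{L(H)} < \lambda_1$. The only cosmetic difference is that the paper first replaces $\lambda_{h,1}$ by $\lambda_1$ and then shows both resulting summands $\Delta t\bigl(2(\norm{F}{L(H)}-\lambda_1)+\trace(Q)\norm{G}{L(H;L(U;H))}^2\bigr)$ and $\Delta t^2(\norm{F}{L(H)}^2-\lambda_1^2)$ are negative, whereas you keep $\lambda_{h,1}$ and move the quadratic term to the right-hand side; the underlying observation is identical.
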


\begin{proof}
	We show first that \eqref{eq:simultaneous_stability} yields asymptotic mean-square stability of~\eqref{eq:SPDEWiener} by reducing it to the assumption in Theorem~\ref{thm:ms_stability_Lyapunov}.
	For the second term there, note that for any $v \in \dot{H}^2$, 
\begin{align*}
 \trace[G(v)Q(G(v))^*]
    &=\trace[(G(v))^*G(v)Q] 
      = \sum_{k=1}^\infty \langle G(v)Q f_k , G(v)f_k \rangle \\ 
    &\leq \sum_{k=1}^\infty \mu_k \|G\|_{L(H;L(U;H))}^2 \|v\|^2_H \| f_k \|_U^2 
      = \trace(Q) \|G\|_{L(H;L(U;H))}^2   \|v\|_H^2,
\end{align*}
where the first equality follows from the properties of the trace.
The first term satisfies
\begin{align*}
 \langle v,A v + F(v)\rangle 
  = \langle v,F(v)\rangle + \langle v,A v\rangle 
  \le \norm{F}{L(H)} \|v\|^2_H - \|v\|^2_1 
  \le ( \norm{F}{L(H)} -\lambda_1) \|v\|_H^2
\end{align*}
using the definition of $\|\cdot\|_1$.
Altogether, we therefore obtain
\begin{align*}
2\langle v,A v + F(v)\rangle_H  &+ \trace[G(v)Q(G(v))^*] \\ &\le \bigl( 2 \left(\norm{F}{L(H)} - \lambda_1 \right)+ \trace(Q) \norm{G}{L(H;L(U;H))}^2 \bigr) \norm{v}{H}^2,
\end{align*}
 i.e., with~\eqref{eq:simultaneous_stability} asymptotic mean-square stability of~\eqref{eq:SPDEWiener} by setting
\begin{align*}
	c = - \bigl( 2 \left(\norm{F}{L(H)} - \lambda_1 \right)+ \trace(Q) \norm{G}{L(H;L(U;H))}^2 \bigr).
\end{align*}

We continue with~\eqref{schemeEMbased} observing first that $\lambda_{h,1} \ge \lambda_1$ by~\eqref{eq:eigenvalue_inequality}. Therefore, the condition
in Theorem~\ref{Thm:MSStableFD}(\ref{prop-item:backwardEuler}) yields 
\begin{align*}
\Delta t \bigl( 2 \left( \norm{F}{L(H)} - \lambda_1 \right) + \trace(Q) \norm{G}{L(H;L(U;H))}^2 \bigr) + {\Delta t}^2 \bigl( \norm{F}{L(H)}^2 - \lambda_1^2 \bigr) < 0.
\end{align*}
This is satisfied and finishes the proof since the first term is negative by assumption and so is the second using \eqref{eq:simultaneous_stability} and
\begin{align*}
\norm{F}{L(H)}^2 - \lambda_1^2
 & = \bigl( \norm{F}{L(H)} + \lambda_1 \bigr) \bigl( \norm{F}{L(H)} - \lambda_1 \bigr) \\
 & \le \left( \norm{F}{L(H)} + \lambda_1 \right) \bigl( \left( \norm{F}{L(H)}- \lambda_1 \right) + 2^{-1} \trace(Q) \norm{G}{L(H;L(U;H))}^2 \bigr) 
 < 0.
 \qedhere
\end{align*}
\end{proof}

Note that under \eqref{eq:simultaneous_stability} in Corollary~\ref{cor:stability_connection}, the backward Euler--Maruyama scheme preserves the qualitative behaviour of the analytical solution without any restriction on~$h$ and~$\Delta t$. Hence, it can be applied to numerical methods requiring different refinement parameters in parallel such as multilevel Monte Carlo, which efficiently approximate quantities $\mathbb{E}[\varphi(X(T))]$ (see, e.g.,~\cite{BLS13,BL12a} for details). Here, it is essential that the behaviour is preserved on all refinement levels~\cite{A13}.

On the other hand, note that in the homogeneous case, i.e., $F=0$, the stability condition in Theorem~\ref{Thm:MSStableFD}(\ref{prop-item:backwardEuler}) reduces to
\begin{align*}
\trace(Q) \norm{G}{L(H;L(U;H))}^2 < \lambda_{h,1} (2 + \Delta t \lambda_{h,1})
\end{align*}
so that even if \eqref{eq:SPDE} is asymptotically mean-square unstable, its approximation~\eqref{schemeEMbased} can always be rendered stable by letting $\Delta t$ be large enough. In that case the analytical solution and its approximation have a different qualitative behaviour for large times.

\begin{remark}
Based on Theorem~\ref{Thm:MSStableFD}, it is also possible to examine the relation between asymptotic mean-square stability of~\eqref{eq:SPDEWiener} and its approximation by the other rational approximations. However, due to the nature of the sufficient conditions in Theorem~\ref{Thm:MSStableFD}, analogous results to Corollary~\ref{cor:stability_connection} include restrictions on~$h$ and~$\Delta t$.  
\end{remark}

For the Milstein scheme considered in Proposition~\ref{Prop:MSStableMilstein} we can also derive a sufficient condition for the simultaneous mean-square stability not relying on~$h$ and~$\Delta t$. However, due to the additional term in Proposition~\ref{Prop:MSStableMilstein}, the condition becomes slightly more restrictive than in Corollary~\ref{cor:stability_connection}. More precisely we obtain the following:
\begin{corollary}
Assume that $X_0=x_0 \in \dot{H}^1$ is deterministic and $F = 0$. Then
the zero solutions of~\eqref{eq:SPDEWiener} and its Milstein approximation~\eqref{Eq:MilsteinScheme} with $R(z) = (1-z)^{-1}$ are asymptotically mean-square stable for all $h$ and $\Delta t$ if
\begin{align*}
-\sqrt{2} \lambda_1 + \operatorname{Tr}(Q) \| G \|^2_{L(H;L(U;H))}< 0.
\end{align*}
\end{corollary}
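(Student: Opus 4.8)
The plan is to mirror the proof of Corollary~\ref{cor:stability_connection}, handling the two equations separately and then showing that the single hypothesis implies the relevant sufficient condition for each. For the analytical equation \eqref{eq:SPDEWiener} with $F=0$, I would reuse verbatim the computation already carried out in the proof of Corollary~\ref{cor:stability_connection}: for $v\in\dot H^2$ we have $2\langle v, Av\rangle_H + \trace[G(v)Q(G(v))^*] \le (-2\lambda_1 + \trace(Q)\norm{G}{L(H;L(U;H))}^2)\norm{v}{H}^2$. Since $-\sqrt2\,\lambda_1 + \trace(Q)\norm{G}{L(H;L(U;H))}^2 < 0$ and $\lambda_1 > 0$, we get $-2\lambda_1 + \trace(Q)\norm{G}{L(H;L(U;H))}^2 \le -\sqrt2\,\lambda_1 + \trace(Q)\norm{G}{L(H;L(U;H))}^2 < 0$ (using $2 \ge \sqrt2$), so Theorem~\ref{thm:ms_stability_Lyapunov} applies with $c = 2\lambda_1 - \trace(Q)\norm{G}{L(H;L(U;H))}^2$.

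Next I would treat the Milstein approximation \eqref{Eq:MilsteinScheme} with $R(z)=(1-z)^{-1}$ via Proposition~\ref{Prop:MSStableMilstein}. With $F=0$ the sufficient condition there reads
\begin{align*}
1 + \Delta t\,\trace(Q)\norm{G}{L(H;L(U;H))}^2 + \tfrac{\Delta t^2}{2}\trace(Q)^2\norm{G}{L(H;L(U;H))}^4 < (1+\Delta t\,\lambda_{h,1})^2.
\end{align*}
Expanding the right-hand side as $1 + 2\Delta t\,\lambda_{h,1} + \Delta t^2\lambda_{h,1}^2$ and cancelling the $1$, it suffices to show
\begin{align*}
\Delta t\bigl(\trace(Q)\norm{G}{L(H;L(U;H))}^2 - 2\lambda_{h,1}\bigr) + \Delta t^2\bigl(\tfrac12\trace(Q)^2\norm{G}{L(H;L(U;H))}^4 - \lambda_{h,1}^2\bigr) < 0.
\end{align*}
By \eqref{eq:eigenvalue_inequality} we have $\lambda_{h,1}\ge\lambda_1$, so it is enough to verify this with $\lambda_{h,1}$ replaced by $\lambda_1$; then both coefficients are negative. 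Indeed, writing $a = \trace(Q)\norm{G}{L(H;L(U;H))}^2$, the hypothesis is $a < \sqrt2\,\lambda_1$, hence $a - 2\lambda_1 < \sqrt2\,\lambda_1 - 2\lambda_1 < 0$ for the linear coefficient, and $\tfrac12 a^2 - \lambda_1^2 < \tfrac12(\sqrt2\,\lambda_1)^2 - \lambda_1^2 = \lambda_1^2 - \lambda_1^2 = 0$ for the quadratic coefficient. Since $\Delta t > 0$, the whole expression is strictly negative, so Proposition~\ref{Prop:MSStableMilstein} gives asymptotic mean-square stability of the Milstein scheme for every $h$ and $\Delta t$.

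The key structural observation — and the only real design choice — is recognizing that the factor $\sqrt2$ in the hypothesis is precisely what makes the quadratic term $\tfrac12 a^2 - \lambda_1^2$ nonpositive, which is the extra ingredient needed beyond Corollary~\ref{cor:stability_connection} because of the additional $\tfrac{\Delta t^2}{2}(C'\otimes C')q'$ term in Proposition~\ref{Prop:MilsteinRationalApproxS}. I do not expect any genuine obstacle here; the main point to be careful about is that the condition in Proposition~\ref{Prop:MSStableMilstein} must hold for \emph{all} $\Delta t$, which is why one reduces both coefficients of the quadratic polynomial in $\Delta t$ to being negative (rather than, say, completing a square), and that the reduction from $\lambda_{h,1}$ to $\lambda_1$ via \eqref{eq:eigenvalue_inequality} is uniform in $h$. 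With those two remarks the proof is a short chain of inequalities, closely parallel to the final display in the proof of Corollary~\ref{cor:stability_connection}.
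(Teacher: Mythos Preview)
Your proposal is correct and follows essentially the same approach as the paper: reduce stability of \eqref{eq:SPDEWiener} to Corollary~\ref{cor:stability_connection} via $-2\lambda_1 \le -\sqrt{2}\lambda_1$, and for the Milstein scheme expand the condition in Proposition~\ref{Prop:MSStableMilstein} with $F=0$ into a quadratic in $\Delta t$ whose two coefficients are each shown to be negative using $\lambda_{h,1}\ge\lambda_1$ and $a<\sqrt{2}\,\lambda_1$. Your handling of the coefficients is in fact slightly cleaner than the paper's displayed rewriting, but the argument is the same.
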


\begin{proof}
The asymptotic mean-square stability of~\eqref{eq:SPDEWiener} follows by Corollary~\ref{cor:stability_connection} since
\begin{align*}
-2 \lambda_1 +\operatorname{Tr}(Q) \| G \|^2_{L(H;L(U;H))} < - \sqrt{2} \lambda_1 + \operatorname{Tr}(Q) \| G \|^2_{L(H;L(U;H))}< 0.
\end{align*}
The sufficient condition for~\eqref{Eq:MilsteinScheme} in Proposition~\ref{Prop:MSStableMilstein} can be rewritten as
\begin{align*}
\Delta t ( - 2 \lambda_{h,1} + \operatorname{Tr}(Q) \| G \|^2_{L(H;L(U;H))} ) + \Delta t^2( -2 \lambda_{h,1}^2  + \operatorname{Tr}(Q)^2\| G \|^4_{L(H;L(U;H))}) < 0.
\end{align*}
The first summand is negative since
\begin{align*}
- 2 \lambda_{h,1} + \operatorname{Tr}(Q) \| G \|^2_{L(H;L(U;H))} < -\sqrt{2}\lambda_1 + \operatorname{Tr}(Q) \| G \|^2_{L(H;L(U;H))} < 0.
\end{align*}
The assumption $\sqrt{2} \lambda_1 > \operatorname{Tr}(Q) \| G \|^2_{L(H;L(U;H))}$ implies for the second summand that
\begin{align*}
-2 \lambda_{h,1}^2  + \operatorname{Tr}(Q)^2\| G \|^4_{L(H;L(U;H))} < 0.
\end{align*}
Thus, asymptotic mean-square stability of~\eqref{Eq:MilsteinScheme} follows.
\end{proof}

\section{Simulations}\label{sec:numerics} 

In this section we adopt the setting of Example~\ref{Ex:stochastic_heat_equation} and use numerical simulations to illustrate our theoretical results. More specifically, we consider the stochastic heat equation
\begin{align}\label{eq:StochHeat}
\dd X(t) = \nu \Delta X(t) \, \dd t + G(X(t)) \, \dd W(t).
\end{align}
with $X_0(x) = \sqrt{30} x(1-x)$, then $\E [\| X_0 \|_H^2] = 1$.
We consider a $Q$-Wiener process
$W(t) = \sum_{i=1}^\infty \sqrt{\mu_i} \beta_i(t) e_i$,
where $(\beta_i,i\in \N)$ is a sequence of independent, real-valued Brownian motions, and assume $\mu_i = C_\mu i^{-\alpha}$ with $C_\mu > 0$ and $\alpha > 1$. Here, $C_\mu$ scales the noise intensity and $\alpha$ controls the space regularity of~$W$, see, e.g., \cite{LPS14,LS15}.

\subsection{Spectral Galerkin methods}
\label{subsec:spectral_galerkin}

For $G=G_1$ in Example~\ref{Ex:stochastic_heat_equation}, we obtain with the approach presented in \cite[Section 6.4]{K14} the infinite-dimensional counterpart of the geometric Brownian motion
\begin{align*}
X(t) = \sum_{i=1}^\infty \langle X(t),e_i \rangle_H e_i = \sum_{i=1}^\infty x_i(t) e_i,
\end{align*}
where each of the coefficients $x_i(t)$ is the solution to the one-dimensional geometric Brownian motion
\begin{align*}
\dd x_i(t) = -\lambda_i x_i(t) \, \dd t + \sqrt{\mu_i} x_i(t) \, \dd \beta_i(t).
\end{align*}
Furthermore, the second moment is given by
\begin{align*}
\E [\| X(T) \|_H^2 ] = \sum_{i=1}^\infty \E [ |x_i(T)|^2] = \sum_{i=1}^\infty \langle X_0,e_i \rangle_H^2\exp((-2\lambda_i + \mu_i)T).
\end{align*}
Consequently, asymptotic mean-square stability of~\eqref{eq:StochHeat} holds if and only if $-2\lambda_i + \mu_i < 0$ for all $i\in \N$. By using the explicit representation of the eigenvalues $\lambda_i$ and $\mu_i$, this corresponds to $-2 \nu i^2\pi^2 + C_\mu i^{-\alpha} < 0$ or equivalently $-2\lambda_1 + \mu_1 = -2\nu \pi^2 + C_\mu < 0$, i.e., \eqref{eq:StochHeat} is asymptotically mean-square unstable if and only if $C_\mu > 2\nu\pi^2$.

For the spectral Galerkin approximation, we choose $V_h = \operatorname{span}(e_1,\dots,e_{N_h})$, $N_h < \infty$. Thus, we consider 
$X_h(t) = \sum_{k=1}^{N_h} x_k(t)e_k$.
To obtain a fully discrete scheme, we approximate the one-dimensional geometric Brownian motions in time by the three considered rational approximations in Theorem~\ref{Thm:MSStableFD} and Proposition~\ref{Prop:MSStableMilstein}.
Propositions~\ref{Proposition:MSStableFD} and~\ref{Proposition:MilsteinRationalApproxS} yield asymptotic mean-square stability
if and only if the corresponding linear operators $\cS$ satisfy $\rho (\cS) <1$, which is in the first case for $k,\ell = 1,\dots,N_h$ given by
\begin{align*}
\cS(e_k \otimes e_\ell) 
 &= (\Ddet \otimes \Ddet)(e_k \otimes e_\ell) + \Delta t \bigl( (C\otimes C)q \bigr)(e_k \otimes e_\ell) \\
 &= (\Ddet e_k \otimes \Ddet e_\ell) + \Delta t \sum_{m=1}^\infty \mu_m \bigl(((Ce_m)e_k) \otimes ((Ce_m)e_\ell)\bigr).
\end{align*}
Since 
\begin{align*}
\Ddet e_k 
  = R(\Delta t A_h)e_k 
  = \sum_{r=1}^{N_h}R(-\Delta t \lambda_r)\langle e_k , e_r \rangle_H e_r 
  = R(-\Delta t \lambda_k)e_k
\end{align*}
and
\begin{align*}
(Ce_m)e_k 
 &= r_\mathrm{d}^{-1}(\Delta t A_h) P_h G_1(e_k)e_m 
 = r_\mathrm{d}^{-1}(\Delta t A_h) P_h\Bigl(\sum_{n=1}^\infty \langle e_k,e_n\rangle_H \langle e_m,e_n\rangle_H e_n\Bigr)\\
 &= \kdelta{k}{m} r_\mathrm{d}^{-1}(\Delta t A_h)  e_k = \kdelta{k}{m} r_\mathrm{d}^{-1}(-\Delta t \lambda_k)e_k,
\end{align*}
the corresponding eigenvalues $\Lambda_{k,\ell}$
are 
\begin{align*}
\Lambda_{k,\ell} = R(-\Delta t \lambda_k) R(-\Delta t \lambda_\ell) + \kdelta{k}{\ell}\,\Delta t \mu_k \, r_\mathrm{d}^{-1}(-\Delta t \lambda_k)r_\mathrm{d}^{-1}(-\Delta t \lambda_\ell).
\end{align*}
Using a Milstein scheme instead, we obtain for $\cS$ in Proposition~\ref{Proposition:MilsteinRationalApproxS} with similar computations as before and the observations that the commutativity condition in Remark~\ref{rem:g_commutativity} is fulfilled and that $\Delta[\beta_k,\beta_\ell]^j =\kdelta{k}{\ell} \Delta t$
\begin{align*}
\Lambda_{k,\ell} = R(-\Delta t \lambda_k) R(-\Delta t \lambda_\ell) + \kdelta{k}{\ell}\,r_\mathrm{d}^{-1}(-\Delta t \lambda_k)r_\mathrm{d}^{-1}(-\Delta t \lambda_\ell)\left( \Delta t \mu_k  + \Delta t^2\mu_k^2/2\right).
\end{align*}
Note that for both operators $\cS$, the eigenvalues $\Lambda_{k,\ell}$ with $k\neq \ell$ satisfy
\begin{align*}
|\Lambda_{k,\ell}| = |R(-\Delta t \lambda_k) R(-\Delta t \lambda_\ell)| \leq R(-\Delta t \lambda_s)^2 \leq \Lambda_{s,s},
\end{align*}
where $|R(-\Delta t \lambda_s)| = \max_{j = 1,\dots,N_h}|R(-\Delta t \lambda_j)|$. Hence, $\rho(\cS) < 1$ is equivalent to $|\Lambda_{k,k}| <1$ for all $k=1,\dots,N_h$. 
In Table~\ref{tab:geometricBM_spectral} the eigenvalues $\Lambda_{k,k}$ and sufficient and necessary conditions for asymptotic mean-square stability are collected.

\renewcommand{\arraystretch}{2}
\begin{table}[ht]
\small
\begin{tabular}{c||c|c}
\begin{tabular}{c}
rational approximation/\vspace{-0.4cm}\\
stochastic approximation
\end{tabular} &$\Lambda_{k,k}$ & $\rho(\cS)<1 \Leftrightarrow \text{for all }k=1,\dots,N_h:$   \\\hline
backward Euler/EM & $\frac{1 + \Delta t\mu_k}{(1+ \Delta t \lambda_k)^2}$ & $-2\lambda_k + \mu_k - \Delta t\lambda_k^2 < 0$\\\hline
backward Euler/Milstein & $\frac{1 + \Delta t\mu_k + \Delta t^2 \mu_k^2/2}{(1+ \Delta t \lambda_k)^2}$ & $-2\lambda_k + \mu_k + \Delta t(-\lambda_k^2 + \mu_k^2/2) < 0$ \\\hline
Crank--Nicolson/EM & $\frac{(1-\Delta t \lambda_k/2)^2 + \mu_k\Delta t}{(1+\Delta t \lambda_k/2)^2}$ & $-2\lambda_k + \mu_k  < 0$ \\\hline
forward Euler/EM & $(1-\Delta t \lambda_k)^2 + \mu_k\Delta t$ & $-2\lambda_k + \mu_k + \Delta t\lambda_k^2 < 0$
\end{tabular}
\caption{Spectral Galerkin methods with corresponding eigenvalues~$\Lambda_{k,k}$ and asymptotic mean-square stability conditions. 
}
\label{tab:geometricBM_spectral}
\end{table}

As it is noted above, \eqref{eq:StochHeat} is asymptotically mean-square stable if and only if the condition $-2 \lambda_1 + \mu_1 < 0$ holds. As can be seen from Table~\ref{tab:geometricBM_spectral} and the choice of the eigenvalues, the Euler--Maruyama scheme~\eqref{schemeEMbased} with backward Euler and Crank--Nicolson rational approximation shares this property without any restriction on $V_h$ and~$\Delta t$.
\begin{figure}[ht]
\centering
\subfigure[Crank--Nicolson (CN), backward (BE) and forward (FE) Euler.\label{Fig:SpectralMaruyama_a}]{\includegraphics[width = .38\textwidth]{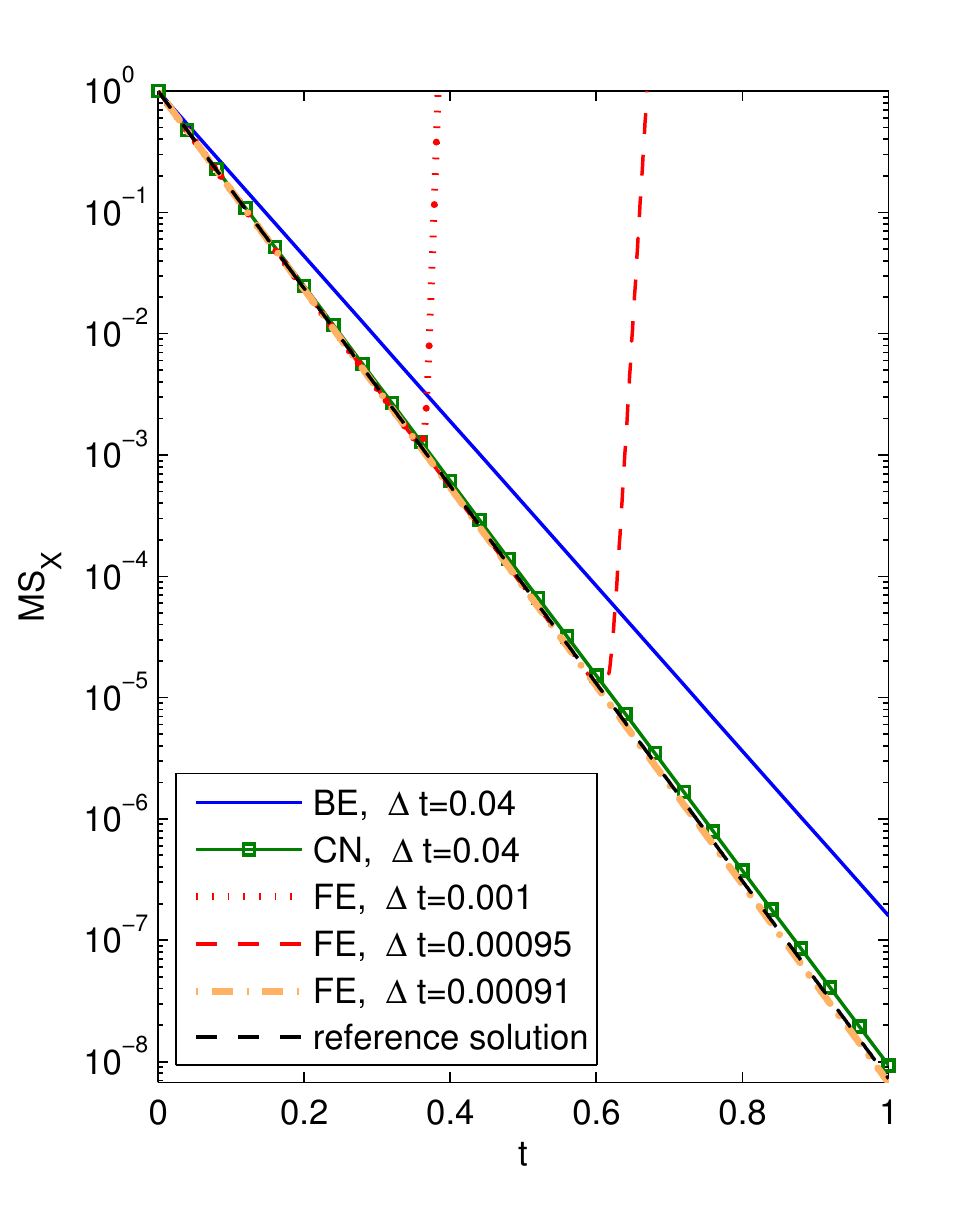}}
\hspace*{3em}
\subfigure[Euler--Maruyama (BE) and Milstein (BM) based on backward Euler.\label{Fig:SpectralMaruyama_b}]{\includegraphics[width = .38\textwidth]{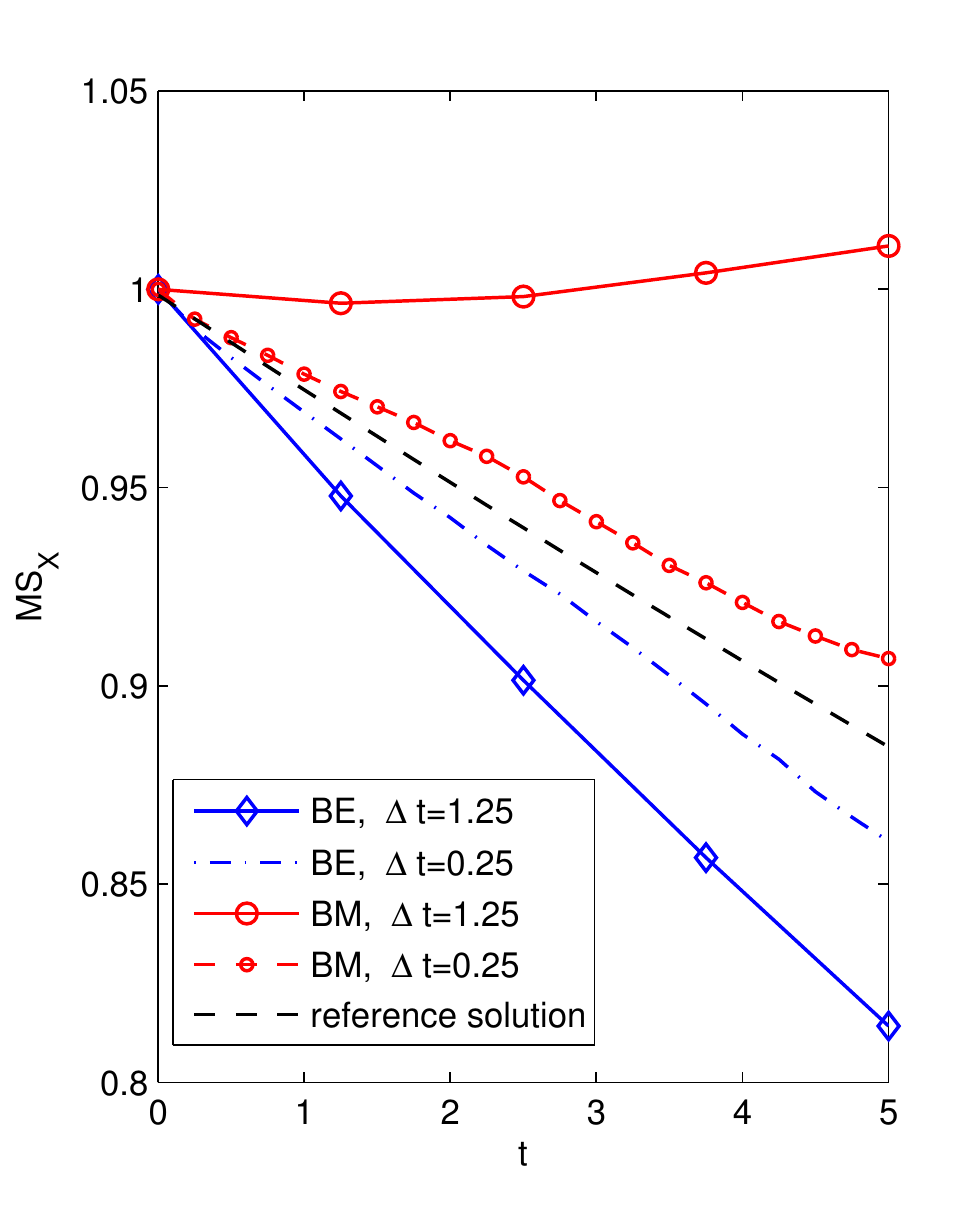}}
\caption{Spectral Galerkin approximate of~\eqref{eq:StochHeat} with~$G_1$, $N_h = 15$, and different~$\Delta t$.}
\end{figure}
In Figure~\ref{Fig:SpectralMaruyama_a} the qualitative behaviour of the Euler--Maruyama method with the three rational approximations in Theorem~\ref{Thm:MSStableFD} is compared.  
We choose $\nu =1$, $N_h = 15$, and $\mu_i = i^{-3}$ for $i \in \N$, i.e., $C_\mu = 1$ and $\alpha = 3$. Since $-2\lambda_1 + C_\mu = -2\pi^2 + 1 < 0$, the analytical solution to~\eqref{eq:StochHeat} is asymptotically mean-square stable.

For the approximation of $\mathbb{E}[\|X_h^j\|_H^2]$ we use a Monte Carlo simulation with $M = 10^6$, i.e., we approximate
\begin{align*}
\mathbb{E}[\|X_h^j\|_H^2]
 \approx \text{MS}_X(t_j) 
 = \frac{1}{M} \sum_{i=1 }^M \sum_{k=1}^{N_h} |\widehat{x}_k^{j,(i)}|^2,
\end{align*}
where $(\widehat{x}_k^{j,(i)},i=1,\dots,M)$ consists of independent samples of numerical approximations of $x_k(t_j)$ with different schemes.
The reference solution is 
\begin{align*}
\mathbb{E}[\|X_h(t) \|_H^2] = \sum_{k=1}^{N_h} \mathbb{E}[|x_k(t)|^2] = \sum_{k=1}^{N_h} \langle X_0,e_k \rangle_H^2 \exp \left( (-2\lambda_k + \mu_k)t \right).
\end{align*} 

As it can be seen in Figure~\ref{Fig:SpectralMaruyama_a}, the backward Euler and the Crank--Nicolson scheme reproduce the mean-square stability of~\eqref{eq:StochHeat}
already for large time step sizes ($\Delta t = 1/25$), but the forward Euler scheme requires a $44$~times smaller~$\Delta t$. Here, the finest time step size is given by $\Delta t = 1/1100$  which satisfies the restrictive bound in Table~\ref{tab:geometricBM_spectral} such that $\rho(\cS) < 1$. Due to a rapid amplification of oscillations caused by negative values of $X_h^j$ for coarser time step sizes outside the stability region ($\Delta t = 1/1000$ and $1/1050$), the mean-square process deviates rapidly from the reference solution at a certain time point.

In Figure~\ref{Fig:SpectralMaruyama_b} the qualitative behaviour of the Euler--Maruyama and the Milstein scheme with a backward Euler rational approximation on the time interval~$[0,5]$ are compared. The parameters $\nu = 8/(5\pi^4)$
and $\mu_i = 3/10\, i^{-3}$ are chosen such that the Milstein scheme is asymptotically mean-square unstable for $\Delta t = 1.25$ and asymptotically mean-square stable for $\Delta t = 0.25$ while the Euler--Maruyama scheme is asymptotically mean-square stable for both choices. These theoretical results are reproduced in the simulation.

\subsection{Galerkin finite element methods}

Let us continue with $G = G_2$ in Example~\ref{Ex:stochastic_heat_equation} and a Galerkin finite element setting, similar to that of \cite{LP17}. This is to say, we let $V_h$ be the span of piecewise linear functions on an equidistant grid of $[0,1]$ with $N_h$ interior nodes so that $V_h$ is an $N_h$-dimensional subspace of~$\dot{H}^1$ with refinement parameter $h=(N_h+1)^{-1}$. With the exception that $U=\dot{H}^1$, all other parameters are as in Figure~\ref{Fig:SpectralMaruyama_a} of Section~\ref{subsec:spectral_galerkin}. 

In contrast to the setting in Section~\ref{subsec:spectral_galerkin}, the solution and its approximation are no longer sums of one-dimensional geometric Brownian motions and thus, analytical necessary and sufficient conditions for $\rho(\cS) < 1$ are not available. We therefore consider the results of Theorem~\ref{Thm:MSStableFD} instead.
With the setting of this section,
\begin{align*}
\lambda_{h,i} = 4 \nu h^{-2} 3 \left( 2 + \cos(i \pi h) \right)^{-1} \left(  \sin(i \pi h / 2) \right)^2
\end{align*}
for $i \in \N$, which was derived in~\cite[Section 6.1]{K14}. For the convenience of the reader, the sufficient conditions of Theorem~\ref{Thm:MSStableFD} for the considered approximation schemes are collected in simplified form in Table~\ref{tab:nemytskii_fem}, expressed in terms of stability parameters $\rho_{\text{BE}}$, $\rho_{\text{CN}}$ and $\rho_{\text{FE}}$. 
By setting
	$\hat{g} = \Bigl( 2 \sum^\infty_{i=1} \lambda_i^{-1} \Bigr)^{1/2}$,
we replace $\norm{G_2}{L(H;L(U;H))}$ in these conditions with the upper bound derived in Example~\ref{Ex:stochastic_heat_equation}. Note that Corollary~\ref{cor:stability_connection} with these parameters implies simultaneous asymptotic mean-square stability of~\eqref{eq:StochHeat} and the finite element backward Euler scheme~\eqref{schemeEMbased}.

\begin{table}[ht!]
	\small
	\begin{tabular}{c||c}
		\begin{tabular}{c}
			rational approximation
		\end{tabular} & $\rho(\cS)<1 \Leftarrow:$   \\\hline
		backward Euler & $\rho_{\text{BE}} = \Delta t \trace(Q) \hat{g}^2 -  2 \Delta t \lambda_{h,1} - \Delta t^2 \lambda_{h,1}^2 < 0$\\\hline
		Crank--Nicolson & $\rho_{\text{CN}} = \max\limits_{k \in \{ 1,N_h\}} \left| \frac{1- \Delta t \lambda_{h,k}/2}{1 + \Delta t \lambda_{h,k}/2} \right|^2  +  \frac{\Delta t \trace(Q) \hat{g}^2}{(1 + \Delta t \lambda_{h,1}/2)^2} -1 < 0 $ \\\hline
		forward Euler & $\rho_{\text{FE}} = \max\limits_{k \in \{ 1,N_h\}} (1- \Delta t \lambda_{h,k})^2 + \Delta t \trace(Q) \hat{g}^2 -1 < 0$
	\end{tabular}
	\caption{Finite element methods with sufficient conditions for $\rho(\cS) < 1$.}
	\label{tab:nemytskii_fem}
\end{table}

As in Section~\ref{subsec:spectral_galerkin} we compare the mean-square behaviour of the backward Euler and the forward Euler scheme in Figure~\ref{Fig:FEM_a} but now for the finite element discretization up to $T=2.5$. We observe that the increase of the time step size by a very small amount, i.e., from $\Delta t = 0.00066$ to $\Delta t = 0.00067$, causes the forward Euler system to switch from a stable to an unstable behaviour. This agrees with the theory in Table~\ref{tab:nemytskii_fem_2}, as $\rho_{\text{FE}}$ changes sign in that interval, i.e., stability is only guaranteed for the smaller time step. Therefore we conclude that the sufficient condition is sharp in our model problem. 

For the approximation of $\mathbb{E}[\|X_h^j\|_H^2]$, we use the same method as before but take $M=10^4$ samples in the Monte Carlo approximation. For the computation of the norm in $H$, we use the fact that for given representation
$X_h^j = \sum_{m=1}^{N_h} x_m \phi_m$
with respect to the \emph{hat functions} $\{\phi_m, m=1\ldots,N_h\}$ that span~$V_h$
\begin{align*}
	\|X_h^j\|_H^2 = \sum_{m=1}^{N_h} \sum_{n=1}^{N_h} x_m x_n \inpro{\phi_m}{\phi_n}{H}.
\end{align*}

In Figure~\ref{Fig:FEM_b} the mean-square behaviour of the the backward Euler scheme and the Crank--Nicolson scheme for $\Delta t = 0.015$ to $\Delta t = 0.15$ is compared. We see from Table~\ref{tab:nemytskii_fem_2} that $\rho_{\text{CN}}$ changes sign when the time step size is increased, which occurs for significantly larger time steps than for the forward Euler scheme. The simulation results show a substantial change in the decay behaviour of $\E [ \| X_h^j \|_H^2]$ for the Crank--Nicolson scheme with time step size $\Delta t = 0.15$ compared to $\Delta t = 0.015$, which is no longer convincing to be mean-square stable. Since the sufficient condition $\rho_{CN} < 0$ from Table~\ref{tab:nemytskii_fem} is not fulfilled for $\Delta t = 0.15$, it is unclear from the theory if asymptotic mean-square stability holds in that case.

\begin{figure}[ht]
	\centering
	\subfigure[Backward (BE) and forward (FE) Euler.\label{Fig:FEM_a}]{\includegraphics[width = .38\textwidth]{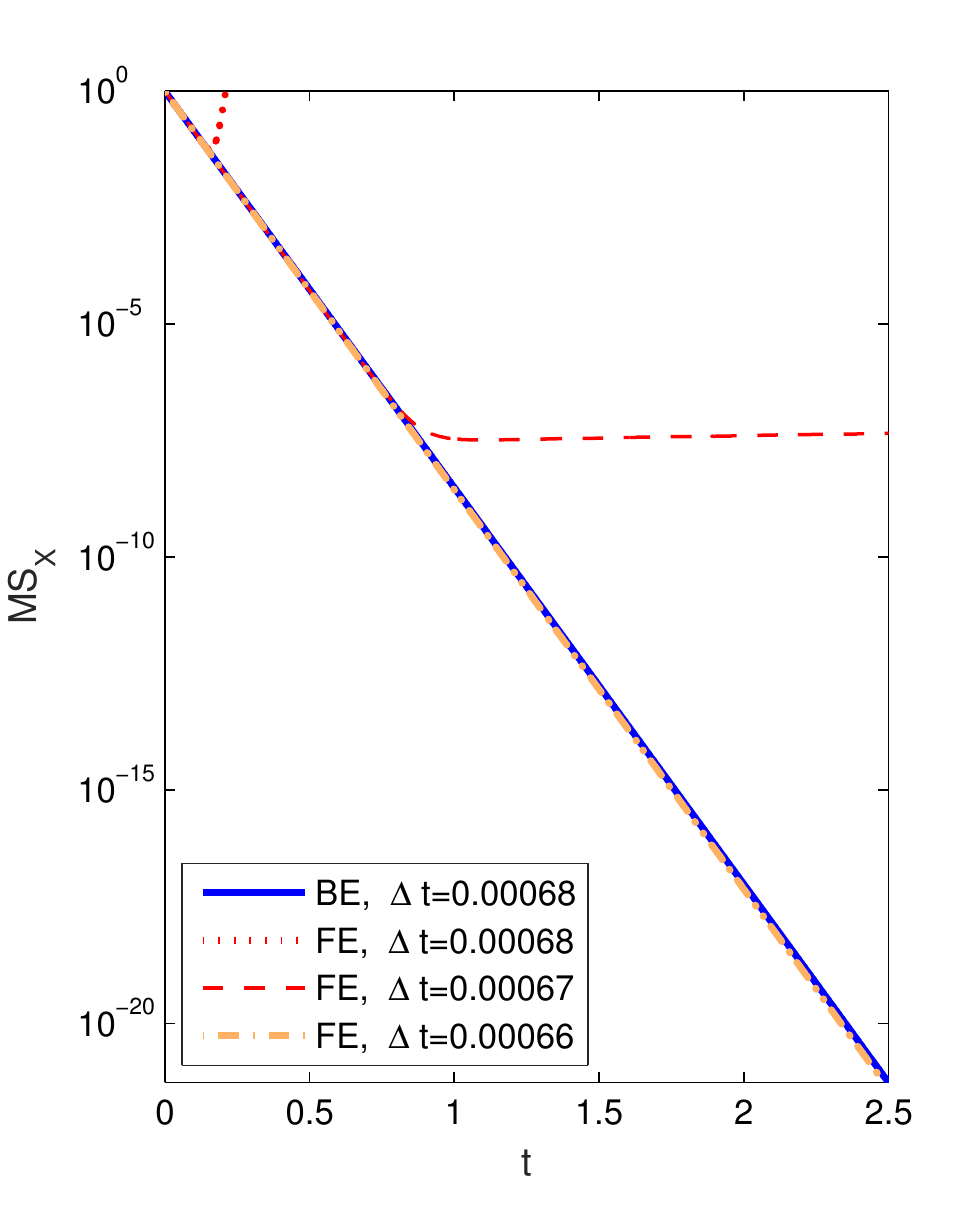}}
	\hspace*{3em}
	\subfigure[Backward Euler (BE) and Crank--Nicolson (CN).\label{Fig:FEM_b}]{\includegraphics[width = .38\textwidth]{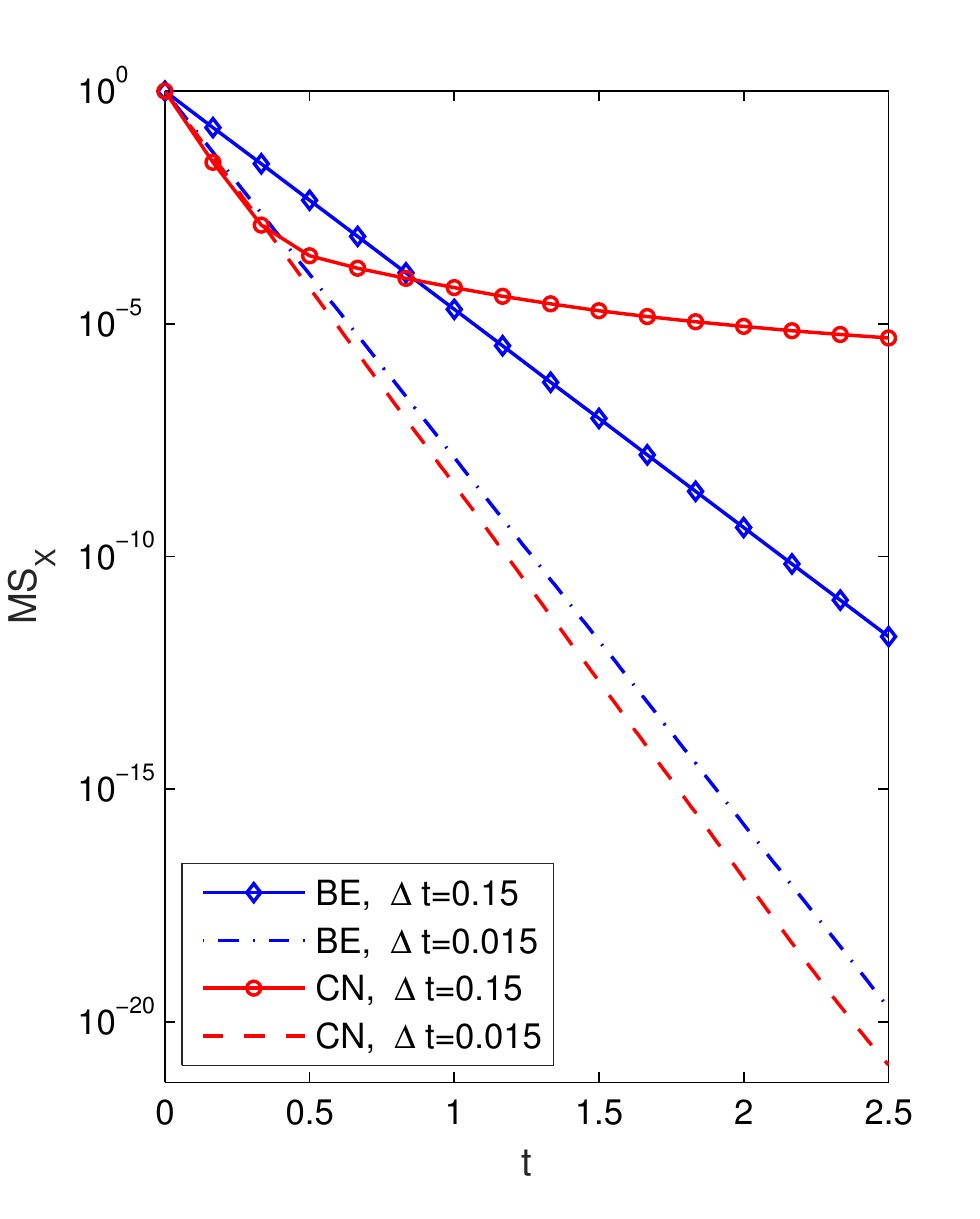}}
	\caption{Finite Element approximation of~\eqref{eq:StochHeat} with $G=G_2$, $N_h = 15$, and different~$\Delta t$.}
\end{figure}

\begin{table}[ht!]
	\small
	\begin{tabular}{c||c|c|c}
		\begin{tabular}{c}
		\end{tabular}
		\backslashbox{$\Delta t$}{ } & $\rho_{\text{BE}}$ & $\rho_{\text{CN}}$ & $\rho_{\text{FE}}$ \\\hline
				0.15 & -5.11613e+00 & 2.08460e-03 & 1.99602e+05 \\\hline
				0.015 & -3.13089e-01 & -1.58504e-01 & 1.91542e+03 \\\hline
				0.00068 & -1.32387e-02 & -1.31050e-02 & 6.09395e-02 \\\hline
				0.00067 & -1.30434e-02 & -1.29135e-02 & 3.39709e-04 \\\hline
				0.00066 & -1.28480e-02 & -1.27221e-02 & -1.27626e-02 
	\end{tabular}
	\caption{Specific values of Table~\ref{tab:nemytskii_fem} for varying~$\Delta t$.}
	\label{tab:nemytskii_fem_2}
\end{table}

\appendix 
\section{Properties of \levy increments}

In this appendix we derive properties of the $U$-valued, square-integrable \levy process that are used in the proofs of Propositions~\ref{Proposition:MSStableFD} and~\ref{Proposition:MilsteinRationalApproxS}. We apply the same setting and notation as in Section~\ref{sec:applications-galerkin}.

\begin{lemma}
\label{lem:appendix:em}
Let $L$ be a $U$-valued \levy process and let, for $0 \le a < b$, $\Delta L = L(b) - L(a)$ and $\Delta t = b-a$. Then 
\begin{align*}
\E [\Delta L \otimes \Delta L] = \Delta t \sum_{k=1}^{\infty} \mu_k f_k \otimes f_k.
\end{align*}
\end{lemma}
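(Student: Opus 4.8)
The plan is to exploit the \KL expansion \eqref{eq:karhunenloeveexpansion} of $L$ together with the defining properties of the processes $(L_i, i \in \N)$: they are real-valued, square-integrable, uncorrelated, and satisfy $\E[(L_i(t))^2] = t$ and $\E[L_i(t)] = 0$ for all $t \ge 0$. First I would write $\Delta L = \sum_{k=1}^\infty \sqrt{\mu_k}\, \Delta L_k\, f_k$ where $\Delta L_k = L_k(b) - L_k(a)$, so that formally
\begin{align*}
\Delta L \otimes \Delta L = \sum_{k,\ell=1}^\infty \sqrt{\mu_k \mu_\ell}\, \Delta L_k \Delta L_\ell\, (f_k \otimes f_\ell),
\end{align*}
and then take expectations term by term. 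The goal is to show $\E[\Delta L_k \Delta L_\ell] = \kdelta{k}{\ell}\, \Delta t$, which kills all off-diagonal terms and leaves $\sum_{k=1}^\infty \mu_k\, \E[(\Delta L_k)^2]\, (f_k \otimes f_k) = \Delta t \sum_{k=1}^\infty \mu_k\, f_k \otimes f_k$.

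For the individual moment identities: the off-diagonal case $k \ne \ell$ follows because the $(L_i)$ are uncorrelated and centered, so by the independence/stationarity of \levy increments (and the martingale property) $\E[\Delta L_k \Delta L_\ell] = \Cov(L_k(b) - L_k(a), L_\ell(b) - L_\ell(a)) = 0$; one needs to be a little careful and either invoke that increments over $[a,b]$ of uncorrelated centered processes with stationary independent increments remain uncorrelated, or reduce to $a = 0$ using stationarity of increments. The diagonal case uses $\E[(\Delta L_k)^2] = \Var(L_k(b) - L_k(a)) = (b-a) = \Delta t$, again by stationarity of increments and $\E[(L_k(t))^2] = t$, $\E[L_k(t)] = 0$.

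The one genuinely nontrivial point is justifying the interchange of the infinite sum (over $k, \ell$) with the expectation, i.e., convergence of the tensor-valued series in $L^1(\Omega; U^{(2)})$. The natural route is to observe that the partial sums $L^\kappa(b) - L^\kappa(a) = \sum_{k=1}^\kappa \sqrt{\mu_k}\,\Delta L_k\, f_k$ converge to $\Delta L$ in $L^2(\Omega; U)$ as $\kappa \to \infty$ (a standard fact for the \KL expansion of a square-integrable \levy process, since $\trace(Q) = \sum \mu_k < \infty$), whence $(L^\kappa(b) - L^\kappa(a)) \otimes (L^\kappa(b) - L^\kappa(a)) \to \Delta L \otimes \Delta L$ in $L^1(\Omega; U^{(2)})$ because the tensor-squaring map is continuous from $L^2$ into $L^1$ with $\|v \otimes v - w \otimes w\|_{U^{(2)}} \le \|v - w\|_U(\|v\|_U + \|w\|_U)$. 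Taking expectations of the finite partial sums is unproblematic (finite linear combinations), and the previous paragraph's moment computations give $\E[(L^\kappa(b) - L^\kappa(a)) \otimes (L^\kappa(b) - L^\kappa(a))] = \Delta t \sum_{k=1}^\kappa \mu_k\, f_k \otimes f_k$, which converges to $\Delta t \sum_{k=1}^\infty \mu_k\, f_k \otimes f_k$ in $U^{(2)}$. Passing to the limit on both sides, using that $\E[\cdot]$ is continuous (indeed a contraction) from $L^1(\Omega; U^{(2)})$ to $U^{(2)}$, finishes the proof. The main obstacle, then, is purely this approximation/continuity bookkeeping; the moment identities themselves are immediate from the assumed properties of the \KL coefficients.
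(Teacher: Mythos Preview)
Your proposal is correct and follows essentially the same approach as the paper: write $\Delta L$ via its \KL expansion, use $\E[\Delta L_k \Delta L_\ell] = \kdelta{k}{\ell}\,\Delta t$, and sum. The paper's proof is in fact shorter; it only checks that $\Delta L \otimes \Delta L \in L^1(\Omega;U^{(2)})$ via $\E[\norm{\Delta L \otimes \Delta L}{U^{(2)}}] = \E[\norm{\Delta L}{U}^2] = \trace(Q)\,\Delta t$ and then computes termwise without further comment, whereas you supply an explicit truncation argument to justify the interchange of sum and expectation.
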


\begin{proof}
We first note that $\Delta L \otimes \Delta L$ is well-defined as a member of $L^1(\Omega;U^{(2)})$ since 
\begin{align*}
\E [\norm{\Delta L \otimes \Delta L}{U^{(2)}}] = \E [\norm{\Delta L}{U}^2] = \trace(Q) \Delta t < \infty.
\end{align*}
The increments $\Delta L_k = L_k(b) - L_k(a)$ of 
$\Delta L = \sum_{k=1}^\infty \sqrt{\mu_k} \Delta L_k f_k$
fulfil $\E[\Delta L_k \Delta L_\ell] =  \kdelta{k}{\ell} \Delta t $ for $k, \ell \in \N$. Thus, we obtain
\begin{equation*}
\E [\Delta L \otimes \Delta L] = \sum_{k,\ell = 1}^\infty \sqrt{\mu_k \mu_l} \E[\Delta L_k \Delta L_l] \, f_k \otimes f_\ell = \Delta t \sum_{k=1}^{\infty} \mu_k f_k \otimes f_k.
\qedhere
\end{equation*}
\end{proof}


\begin{lemma}
\label{lem:appendix:mil}
Let $L$ be a $U$-valued, square-integrable \levy process and set for $0 \le a < b$ with $\Delta t = b-a$,
\begin{align*}
\Delta^{(2)} L 
  = \sum_{k,\ell=1}^\infty \sqrt{\mu_k \mu_\ell} \Bigl( \int^b_a \int^s_a \, \dd L_k(r) \, \dd L_\ell(s)  \Bigr) f_k \otimes f_{\ell} 
    \in L^2(\Omega;U^{(2)}).
\end{align*}
Then 
\begin{enumerate}
\item $\E\left[\Delta^{(2)} L \otimes \Delta L\right] = 0$,
\item $\E\left[\Delta^{(2)} L \otimes \Delta^{(2)} L \right] = \frac{\Delta t^2}{2} \sum_{k,\ell = 1}^\infty \mu_k \mu_\ell
\big( (f_k \otimes f_\ell) \otimes (f_k \otimes f_\ell) \big)$.
\end{enumerate}
\end{lemma}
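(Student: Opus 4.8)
The plan is to reduce both claims to elementary properties of the scalar iterated It\^o/L\'evy integrals $I_{k\ell} = \int_a^b \int_a^s \dd L_k(r)\,\dd L_\ell(s)$ associated with the uncorrelated, mean-zero, square-integrable real-valued L\'evy processes $(L_i, i \in \N)$ with $\E[L_i(t)^2]=t$. First I would record the basic moment facts that drive everything: each $L_i$ is a square-integrable martingale with $\E[\Delta L_i]=0$, the increments over $[a,b]$ are independent of $\cF_a$, and $\E[\Delta L_k \Delta L_\ell] = \kdelta{k}{\ell}\Delta t$ (this is already used in Lemma~\ref{lem:appendix:em}). From the martingale property and the It\^o isometry for iterated integrals against square-integrable martingales one gets $\E[I_{k\ell}] = 0$ for all $k,\ell$, and, crucially, $\E[I_{k\ell}\,\Delta L_m] = 0$ for all $k,\ell,m$, because $I_{k\ell}$ is itself a mean-zero martingale increment over $[a,b]$ that is ``one order higher'' than $\Delta L_m$; concretely, writing $M(s) = \int_a^s \dd L_k(r)$ one has $I_{k\ell}=\int_a^b M(s^-)\,\dd L_\ell(s)$ and $\Delta L_m = \int_a^b \dd L_m(s)$, so their product has expectation $\E[\int_a^b M(s^-)\,\dd[L_\ell,L_m](s)] = \kdelta{\ell}{m}\E[\int_a^b M(s^-)\,\dd s] = 0$ since $\E[M(s^-)]=0$. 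Summing against $\sqrt{\mu_k\mu_\ell}$ and $\sqrt{\mu_m}$ and using dominated convergence (justified by the $L^2(\Omega;U^{(2)})$-membership of $\Delta^{(2)}L$ and $L^2(\Omega;U)$-membership of $\Delta L$) gives claim (i).

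For claim (ii) I would expand
\begin{align*}
\E\bigl[\Delta^{(2)}L \otimes \Delta^{(2)}L\bigr]
 = \sum_{k,\ell,m,n=1}^\infty \sqrt{\mu_k\mu_\ell\mu_m\mu_n}\,\E[I_{k\ell}\,I_{mn}]\,(f_k\otimes f_\ell)\otimes(f_m\otimes f_n),
\end{align*}
and compute $\E[I_{k\ell}\,I_{mn}]$. Again using $I_{k\ell}=\int_a^b M_k(s^-)\,\dd L_\ell(s)$ with $M_k(s)=\int_a^s\dd L_k(r)$, the product-moment formula for two martingale integrals yields $\E[I_{k\ell}I_{mn}] = \E[\int_a^b M_k(s^-)M_m(s^-)\,\dd[L_\ell,L_n](s)] = \kdelta{\ell}{n}\int_a^b \E[M_k(s^-)M_m(s^-)]\,\dd s = \kdelta{\ell}{n}\,\kdelta{k}{m}\int_a^b (s-a)\,\dd s = \kdelta{k}{m}\kdelta{\ell}{n}\,\Delta t^2/2$, where I used $\E[M_k(s)M_m(s)] = \kdelta{k}{m}(s-a)$ and $[L_\ell,L_n]_s - [L_\ell,L_n]_a$ has expectation $\kdelta{\ell}{n}(s-a)$ by the uncorrelatedness and normalization of the $L_i$. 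Substituting this back collapses the quadruple sum to the diagonal $k=m$, $\ell=n$, producing exactly $\frac{\Delta t^2}{2}\sum_{k,\ell=1}^\infty \mu_k\mu_\ell\,(f_k\otimes f_\ell)\otimes(f_k\otimes f_\ell)$, which is claim (ii); convergence of this series in $U^{(4)}$ (equivalently finiteness of the norm) follows from $\sum_k\mu_k = \trace(Q)<\infty$, and this also retroactively confirms $\Delta^{(2)}L \in L^2(\Omega;U^{(2)})$ as asserted in the statement.

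The main obstacle I anticipate is not the combinatorics but making the iterated-integral manipulations rigorous for general square-integrable L\'evy processes rather than Brownian motion: one must justify the product/isometry formula $\E[\int M\,\dd L_\ell \cdot \int N\,\dd L_n] = \E[\int MN\,\dd[L_\ell,L_n]]$ and the identification $\E[\dd[L_\ell,L_n]] = \kdelta{\ell}{n}\,\dd t$ (i.e.\ that the predictable quadratic covariation of the normalized uncorrelated components has the expected diagonal structure), and one must control the interchange of the infinite $\KL$ sums with expectations. I would handle this by citing the theory of stochastic integration with respect to Hilbert-space-valued square-integrable martingales from \cite{PZ07} (the same reference already invoked for well-posedness of \eqref{eq:SPDEmild}), noting that each $L_i$ inherits the martingale and (normalized) bracket properties from $L$ via the $\KL$ expansion \eqref{eq:karhunenloeveexpansion}, and using that the partial sums of $\Delta^{(2)}L$ form a Cauchy sequence in $L^2(\Omega;U^{(2)})$ so that all the limits pass inside the bilinear expectation. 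Everything else is the bookkeeping carried out above.
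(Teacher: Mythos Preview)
Your proposal is correct and follows essentially the same route as the paper: both reduce the two claims to computing the scalar moments $\E[I_{k\ell}\Delta L_m]$ and $\E[I_{k\ell}I_{mn}]$ via the product formula for martingale integrals, identifying the (predictable) quadratic covariation of the uncorrelated components as $\kdelta{j}{\ell}\,\dd s$, and then collapsing the $\KL$ sums. The paper carries out exactly the justification you flag as the main obstacle---it uses the angle bracket $\langle L_j,L_\ell\rangle$, shows via integration by parts and \cite[Theorem~4.49(ii)]{PZ07} that $[L_j,L_\ell]$ is already a martingale for $j\neq\ell$ so that $\langle L_j,L_\ell\rangle=0$, and handles the summability via $\trace(Q)<\infty$ just as you outline.
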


\begin{proof}
Since $L$ is stationary, we may assume without loss of generality that $a=0$ and $b=t>0$. We first note that
\begin{align*}
&\E\Bigl[ \Bigl( \int^t_0 \int^s_0 \, \dd L_i(r) \, \dd L_j(s)  \Bigr) \Bigl( \int^t_0 \int^s_0 \, \dd L_k(r) \, \dd L_\ell(s)  \Bigr) \Bigr] \\ 
& \hspace*{5em} = \E\Bigl[ \Bigl( \int^t_0 L_i(s-) \, \dd L_j(s)  \Bigr) \Bigl( \int^t_0 L_k(s-) \, \dd L_\ell(s)  \Bigr) \Bigr].
\end{align*}
To simplify this expression,
we use the angle bracket process $(\inpro{X}{Y}{t}, t \ge 0)$, which for two real-valued semimartingales $X$ and $Y$ with (locally) integrable quadratic covariation $[X,Y]$ is defined as the unique compensator which makes $([X,Y]_t-\inpro{X}{Y}{t}, t \ge 0)$ a local martingale. For this, we have the \emph{polarization identity},
\begin{align*}
\inpro{X}{Y}{t} = \frac{1}{4} \left( \inpro{X+Y}{X+Y}{t} - \inpro{X-Y}{X-Y}{t} \right),
\end{align*}
which can be found, along with an introduction to this process, e.g., in~\cite[Section III.5]{P04}.

For square-integrable martingales $M$, it holds (see, e.g.,~\cite[Section 8.9]{K12b}) that $\E[\inpro{M}{M}{t}] = \E[M^2(t)]$ and therefore, by the polarization identity, if $N$ is another square-integrable martingale, then,
\begin{align*}
\E[\inpro{M}{N}{t}] = \frac{1}{4} \left( \E[(M(t)+N(t))^2] - \E[(M(t)-N(t) )^2] \right) = \E[M(t) N(t)] .
\end{align*}
Applying this to the \levy integral, which is a martingale, we obtain 
\begin{align*}
& \E\Bigl[ \Bigl( \int^t_0 L_i(s-) \, \dd L_j(s)  \Bigr) \Bigl( \int^t_0 L_k(s-) \, \dd L_\ell(s)  \Bigr) \Bigr] \\ 
& \qquad = \E\Bigl[ \bigl\langle \int_0 L_i(s-) \, \dd L_j(s),\int_0 L_k(s-) \, \dd L_\ell(s)\bigr\rangle_t   \Bigr] 
= \E\Bigl[ \int^t_0 L_i(s-)L_k(s-) \, \dd \inpro{L_j}{L_\ell}{s} \Bigr],
\end{align*}
where the last equality is a property of the angle bracket process and the stochastic integral, see \cite[Section 8.9]{K12b}. Now, when $j = \ell$, we have, since $L_j$ is a \levy process and $\E[L_j^2(s)] = s$, that $\inpro{L_j}{L_\ell}{s} = \inpro{L_j}{L_j}{s} = s$ by \cite[Chapter 8]{PZ07}. When $j \neq \ell$ on the other hand, $L_j L_\ell$ is a square-integrable martingale by \cite[Theorem 4.49(ii)]{PZ07}. Integration by parts yields
\begin{align*}
	[L_j,L_\ell]_s = L_j(s) L_\ell(s) - \int^s_0 L_j(r-) \, \dd L_\ell(r) - \int^s_0 L_\ell(r-) \, \dd L_j(r).
\end{align*}
Therefore, $[L_j,L_\ell]$ is also a square-integrable martingale (with zero mean), because the right hand side is a square-integrable martingale.
Since $(\inpro{L_j}{L_\ell}{s}, s \ge 0)$ is the \emph{unique} compensator of $[L_j,L_\ell]$ it must follow that $\inpro{L_j}{L_\ell}{s} = 0$ for all $s \ge 0$. Thus, 
	$\E [ \int^t_0 L_i(s-)L_k(s-) \, \dd \inpro{L_j}{L_\ell}{s} ]$
is non-zero only if $j = \ell$, and in that case
\begin{align*}
\E\Bigl[ \int^t_0 L_i(s-)L_k(s-) \, \dd \inpro{L_j}{L_j}{s} \Bigr] 
 =  \int^t_0 \E\left[L_i(s-)L_k(s-)\right] \, \dd s.
\end{align*}
In conclusion we have obtained
\begin{equation}
\label{eq:iteratedintegralcovariance}
	\E\Bigl[ \Bigl( \int^t_0 \int^s_0 \, \dd L_i(r) \, \dd L_j(s)  \Bigr) \Bigl( \int^t_0 \int^s_0 \, \dd L_k(r) \, \dd L_\ell(s)  \Bigr) \Bigr] =
	\begin{dcases*} 
	t^2/2 & for $j = \ell$ and $i = k$, \\
	0 & otherwise,
	\end{dcases*}
\end{equation}
which yields by the monotone convergence theorem that $\Delta^{(2)} L \in L^2(\Omega;U^{(2)})$ with
\begin{align*}
\E\Bigl[\norm{\Delta^{(2)} L}{U^{(2)}}^2\Bigr]  
= \sum_{k,\ell=1}^\infty \mu_k \mu_\ell \E\Bigl[\Bigl( \int^t_0 \int^s_0 \, \dd L_k(r) \, \dd L_\ell(s)  \Bigr)^2\Bigr] 
= \frac{t^2}{2} \sum_{k,\ell=1}^\infty \mu_k \mu_\ell = \frac{t^2}{2} \trace(Q)^2 
< \infty.
\end{align*}
This entails that $\Delta^{(2)} L \otimes \Delta L \in L^1(\Omega;U^{(2)}\otimes U)$, since 
\begin{align*}
\left(\E\bigl[\norm{\Delta^{(2)} L \otimes \Delta L}{U^{(2)} \otimes U}\bigr]\right)^{2} 
 & = \left(\E\bigl[\norm{\Delta^{(2)} L}{U^{(2)}} \norm{\Delta L}{U}\bigr]\right)^{2} \\ 
 & \le \E\bigl[\norm{\Delta^{(2)} L}{U^{(2)}}^2\bigr] \E\left[\norm{\Delta L}{U}^2\right] 
 = \frac{t^3}{2} \trace(Q)^3 
    < \infty.
\end{align*}
by the Cauchy--Schwarz inequality.
Similarly, it holds that $\Delta^{(2)} L \otimes \Delta ^{(2)}L \in L^1(\Omega;U^{(2)}\otimes U^{(2)})$. 
Therefore, we obtain
\begin{align*}
 \E\bigl[\Delta^{(2)} L \otimes \Delta L\bigr] 
    = \sum_{k,\ell,m=1}^\infty \sqrt{\mu_k \mu_\ell \mu_m} \;\E\Bigl[\Delta L_m\Bigl( \int^t_0 \int^s_0 \, \dd L_k(r) \, \dd L_\ell(s)  \Bigr)\Bigr] (f_k \otimes f_\ell) \otimes f_m,
\end{align*}
and, in the same way as the first observation of this proof,
\begin{align*}
\E\Bigl[\Delta L_m\Bigl( \int^t_0 \int^s_0 \, \dd L_k(r) \, \dd L_\ell(s)  \Bigr)\Bigr] 
 & = \E\Bigl[\bigl\langle \int \, \dd L_m(s),\int L_k(s-) \, \dd L_\ell(s)\bigr\rangle_t \Bigr] \\ 
 &= \E\Bigl[\int^t_0 L_k(s-) \, \dd  \inpro{L_m}{L_\ell}{s}\Bigr] 
 = 0.
\end{align*}
This is justified since $\inpro{L_m}{L_\ell}{s} \neq 0$ only if $m = \ell$ and that in this case the expectation of the integral is still zero since $L_k$ has zero expectation. 

We note that by~\eqref{eq:iteratedintegralcovariance}
\begin{align*}
\E\bigl[\Delta^{(2)} L \otimes \Delta^{(2)} L \bigr] 
&= \sum_{i,j,k,\ell = 1}^\infty \sqrt{\mu_i \mu_j \mu_k \mu_\ell} \big( (f_i \otimes f_j) \otimes (f_k \otimes f_\ell) \big) \\ 
&\hspace*{5em} \cdot \E\Bigl[ \Bigl( \int^t_0 \int^s_0 \, \dd L_i(r) \, \dd L_j(s)  \Bigr) \Bigl( \int^t_0 \int^s_0 \, \dd L_k(r) \, \dd L_\ell(s)  \Bigr) \Bigr] \\
&= \frac{t^2}{2} \sum_{k,\ell = 1}^\infty \mu_k \mu_\ell
\big( (f_k \otimes f_\ell) \otimes (f_k \otimes f_\ell) \big),
\end{align*}
which shows the second claim.
\end{proof}

\bibliographystyle{hplain}
\bibliography{ms-stability}

\end{document}